\newfont{\bb}{msbm10}
\begin{document}
\cleardoublepage
\pagestyle{myheadings}

\bibliographystyle{plain}

\title{Poisson Integrators based on splitting method for Poisson systems}
\author
{Beibei Zhu \\
{\it Department of Applied Mathematics, School of Mathematics and Physics}\\
{\it University of Science and Technology Beijing, Beijing 100083, China}\\
{\it Email: zhubeibei@lsec.cc.ac.cn}\\[2mm]
Lun Ji \\
{\it LSEC, ICMSEC, Academy of Mathematics and Systems Science,}\\
{\it Chinese Academy of Sciences, Beijing 100190, China}\\
{\it School of Mathematical Sciences, University of Chinese Academy of Sciences}\\
{\it  Beijing 100049, China}\\
{\it Email: ujeybn@lsec.cc.ac.cn}
 \\[2mm]
Aiqing Zhu \\
{\it LSEC, ICMSEC, Academy of Mathematics and Systems Science,}\\
{\it Chinese Academy of Sciences, Beijing 100190, China}\\
{\it School of Mathematical Sciences, University of Chinese Academy of Sciences}\\
{\it Beijing 100049, China}\\
{\it Email: zaq@lsec.cc.ac.cn}
\\[2mm]
Yifa Tang\\
{\it LSEC, ICMSEC, Academy of Mathematics and Systems Science}\\
{\it Chinese Academy of Sciences, Beijing 100190, China}\\
{\it School of Mathematical Sciences, University of Chinese Academy of Sciences}\\
{\it Beijing 100049, China} \\
{\it Email: tyf@lsec.cc.ac.cn}}

\date{}

\maketitle

\markboth{\small B.B. Zhu, L. Ji, A.Q. Zhu and Y.F. Tang}
{\small Poisson Integrators for Poisson systems}

\begin{abstract}
We propose Poisson integrators for the numerical integration of separable Poisson systems. We analyze three situations in which the Poisson systems are separated in three ways and the Poisson integrators can be constructed by using the splitting method. Numerical results show that the Poisson integrators outperform the higher order non-Poisson integrators in phase orbit tracking, long-term energy conservation and efficiency.

\bigskip

{\bf Keywords:}\quad
Poisson systems, Poisson integrators, splitting technique, energy conservation

\bigskip

\end{abstract}

\section{Introduction}

In this paper we propose the Poisson integrators for the numerical integration of the Poisson systems\cite{Lie} with
separable Hamiltonian. Poisson systems have Poisson structures which are preserved by the
Poisson integrators. There is no universal approach to constructing the Poisson integrators for arbitrary Poisson system. However, by using the splitting method, one can construct the Poisson integrators for separable Poisson systems. We identify three situations in which the Poisson systems are separated in three ways and the Poisson integrators can be constructed.

Poisson systems are generalized canonical Hamiltonian systems where the constant matrix $J^{-1}$ is replaced by a variable-dependent matrix $R(Z)$.  They have been discovered in a variety of scientific disciplines, such as the celestial mechanics, quantum mechanics, plasma physics and fluid dynamics. The well-known Poisson systems are the Euler equations for the rigid body\cite{Touma}, the nonlinear Sch\"{o}dinger equations\cite{Tang1,Faou}, the charged particle system\cite{Zhou,LiT,Zhang2}, the gyrocenter system\cite{Qin,Zhang,Zhu2}, the Maxwell-Vlasov equations\cite{LiYZ,Morrison2}, the ideal MHD equations\cite{Morrison1} and the isentropic compressible fluids. The phase flow of the Poisson system is usually very difficult to obtain. Thus, it is critical to construct accurate and efficient numerical integrators with long-term conservation property and stability. The Poisson integrators, like the symplectic methods\cite{Arnold,Channell,Feng1984,FPS,Forest,GNT,SSC94,Suris,Tang1} for canonical Hamiltonian systems, exhibit advantageous structure-preserving properties\cite{Channell,Feng1984,Forest}. Meanwhile, the Poisson integrators have the property of long-term energy conservation. Therefore, we will formulate the construction of the Poisson integrators for Poisson systems.

Many researchers have paid attention to investigating the Poisson integrators for the Poisson systems, including the theoretical results on the construction of the integrators \cite{Channell91,GeZ} and the application of the integrators to the Schr\"{o}dinger equation\cite{Faou}, the rigid body problem\cite{Touma} and the charged particle system\cite{Zhou}.
Ge and Marsden proposed the Lie-Poisson integrator that exactly preserves the Lie-Poisson structure based on the generating function which is derived as an approximate solution of Hamiltonian-Jacobi equation\cite{GeZ}. Channel and Scovel reformulate the integrator of Ge and Marsden in terms of algebra variable and implement it to arbitrary high order for regular quadratic Lie algebra\cite{Channell91}. For the application of the Poisson integrators, Faou and Lubich derived a symmetric Poisson integrator using the variational splitting technique based on the discovery that the Hamiltonian reduction of the Schr\"{o}dinger equation to the Gaussian wavepacket manifold inherits a Poisson structure\cite{Faou}.
Touma and Wisdom derived a symplectic integrator for a free rigid body and incorporated this integrator in the $n$-body integrator\cite{Wisdom} to provide a Lie-Poisson integrator for the one or more rigid bodies dynamics\cite{Touma}. Recently, the splitting technique has been applied to construct the Poisson integrators for the Poisson systems. Non-canonical Hamiltonian systems are special Poisson systems with invertible $R(Z)$. Zhu et al. investigated the particular situations that the explicit K-symplectic schemes can be constructed for the non-canonical Hamiltonian systems\cite{Zhu}. He et al. constructed the explicit K-sympletic methods for the charged particle system\cite{Zhou}. Li et al. used the Fourier spectral method and the finite volume method in space, coupled with the splitting method in time to develop the numerical methods which have good conservation property for the Vlasov-Maxwell equations\cite{LiYZ}.

In the present article we separate the Poisson systems in three ways and identify three situations in which the Poisson integrators can be constructed. By separating the Poisson system into several subsystems and exactly solving the subsystems, one can obtain a first order Poisson integrator by composing the exact solution of the subsystems. Furthermore, higher order Poisson integrator can be constructed by composing the first order Poisson integrator.  The Poisson integrators are compared with the higher order Runge-Kutta methods\cite{Sanzserna,Butcher1,Butcher2} to demonstrate their superiorities in structure preservation. The numerical simulations in two Poisson systems show that the Poisson integrators behave better in phase orbit tracking, long-term energy conservation than the higher order Runge-Kutta methods.

This paper is organized as follows. Section 2 gives a brief introduction to the Poisson systems and the Poisson integrators. Section 3 indicates how to use the splitting method to construct the Poisson integrators. We identify three situations that the Poisson integrators can be constructed. Section 4 presents two classical Poisson systems. In Section 5, numerical methods that are used to make comparison are presented and the numerical results in two Poisson systems are provided. In Section 6, we summarize our work.

\section{Poisson systems and Poisson integrators}

\newtheorem{defin}{Definition}

Poisson systems\cite{Lie} are generalizations of canonical Hamiltonian systems.
It is of the following form
\begin{equation}\label{poisson}
\frac{dZ}{dt}=R(Z)\nabla H(Z),\quad  Z=(z_1,z_2,\cdots,z_m)\in \mathbb{R}^m
\end{equation}
where $H$ is the Hamiltonian and the matrix $R(Z)=(r_{ij}(Z))$ is skew-symmetric and for all $i,j,k$\cite{GNT}
$$
\sum_{l=1}^m \Big(\frac{\partial r_{ij}(Z)}{\partial z_l}r_{lk}(Z)+\frac{\partial r_{jk}(Z)}{\partial z_l}r_{li}(Z)+\frac{\partial r_{ki}(Z)}{\partial z_l}r_{lj}(Z)\Big)=0.
$$
The Poisson bracket\cite{Lie} of two smooth functions $F,G$ is defined as
$$
\{F,G\}(Z)=\sum_{i,j=1}^m \frac{\partial F(Z)}{\partial z_i}r_{ij}(Z)\frac{\partial G(Z)}{\partial z_j}
$$
or more compactly as
$$
\{F,G\}(Z)=\nabla F(Z)^{\top}R(Z)\nabla G(Z).
$$
The Poisson bracket has the property of bilinearity\cite{GNT}
$$
\{aF+bG, H\}=a\{F,H\}+b\{G,H\},
$$
$$
\{F,aG+bH\}=a\{F,G\}+b\{F,H\},
$$
and skew-symmetry
$$
\{F,G\}=-\{G,F\}.
$$
It also satisfies the Lebniz's rule
$$
\{FG,H\}=\{F,H\}G+F\{G,H\}
$$
and the Jacobi identity
$$
\{\{F,G\},H\}+\{\{G,H\},F\}+\{\{H,F\},G\}=0.
$$
If we replace the matrix $R(y)$ with the constant matrix $J^{-1}$ where
\begin{equation*}
J=\bordermatrix{&\cr
&O_n & I_n\cr
&-I_n & O_n\cr
},
\end{equation*}
then the Poisson system becomes a canonical Hamiltonian system.

\begin{defin}
Given a transformation $\phi: U\rightarrow \mathbb{R}^m$ (where $U$ is a open set in $\mathbb{R}^m$), if its Jacobian satisfies
\begin{equation}
\Big[\frac{\partial \phi(Z)}{\partial Z}\Big]^{\top}R(\phi(Z))\Big[\frac{\partial \phi(Z)}{\partial Z}\Big]=R(Z),
\end{equation}
we call it a Poisson map\cite{GNT} with respect to the Poisson bracket defined above.
\end{defin}

As is well known that the Hamiltonian system has the symplectic structure which is exactly preserved by the symplectic geometric methods\cite{FPS,GNT,SSC94}. The Poisson system (\ref{poisson}) also has the Poisson structure which is defined by
$$
W=\sum_{1\le i,j \le m} r_{ij}(Z)dz_i\wedge dz_j.
$$
The exact phase flow $\varphi_t(Z)$ of the Poisson system is a Poisson map. As the Poisson system is usually highly nonlinear system, it is difficult to obtain its phase flow. However, one can use numerical methods that exactly preserve the Poisson structure of the Poisson system. This kind of numerical methods is called Poisson integrators.

 \begin{defin}
Given a numerical method $G_h: Z\rightarrow \tilde{Z}$, if its Jacobian satisfies
\begin{equation}
\Big[\frac{\partial G_h(Z)}{\partial Z}\Big]^{\top}R(G_h(Z))\Big[\frac{\partial G_h(Z)}{\partial Z}\Big]=R(Z),
\end{equation}
we call it a Poisson integrator\cite{GNT}.
\end{defin}
Generally, it is a difficult task to construct the Poisson integrator for general Poisson system. There is no universal approach to constructing the Poisson
integrator for arbitrary Poisson system. However, in many cases of interest, we can construct the Poisson integrators for separable Poisson systems by using the splitting method.

\section{Poisson integrators based on splitting method}

\subsection{Poisson systems that are separated into two subsystems}\label{sec3.1}

Now we introduce how to use the splitting method\cite{GNT,Zhu,Blanes} to construct the Poisson integrator. We consider the case that the Hamiltonian $H$ of the Poisson system (\ref{poisson}) is separable.

Firstly, we are concerned with the case that the Poisson system is of $2n$ dimension and
 the Hamiltonian $H(Z)$ can be separated into two parts, i.e.
$
H(Z)=H_1(z_1,\cdots,z_n)+H_2(z_{n+1},\cdots,z_{2n}).
$
Then the Poisson system can also be separated into two subsystems
\begin{equation}\label{subsystem1}
\frac{dZ}{dt}=R(Z)\nabla_{Z} H_1,
\end{equation}
\begin{equation}\label{subsystem2}
\frac{dZ}{dt}=R(Z)\nabla_{Z} H_2.
\end{equation}
If the two subsystems (\ref{subsystem1}) and (\ref{subsystem2}) can be solved exactly, then the integrators obtained by composing the exact solution of the subsystems are
the Poisson integrators of the Poisson system (\ref{poisson}). If we denote the exact solution of (\ref{subsystem1}) by $\varphi_t^1$, the exact solution of (\ref{subsystem2}) by $\varphi_t^2$, then $\varphi_t^2\circ \varphi_t^1$ is a first order Poisson integrator. Furthermore, if we use the Strang's splitting formula\cite{Strang}, then $\varphi_{t/2}^1 \circ \varphi_{t}^2\circ \varphi_{t/2}^1$ is a second order Poisson integrator. There are many other composing techniques that help to improve the order of the method. One commonly used method is the symmetric composition of first order methods. Given a first order Poisson integrator $\Phi_h$ where $h$ represents the time stepsize, we can compose it by a symmetric way\cite{GNT}
$$
\Psi_h\equiv \Phi_{\alpha_sh}\circ \Phi_{\beta_s h}^*\circ \cdots \circ \Phi_{\beta_2 h}^* \Phi_{\alpha_1 h}\circ \Phi_{\beta_1 h}^*
$$
to make the method $\Psi_h$ a higher order symmetric method. The coefficients satisfy $\alpha_{i}=\beta_{s-i}, 1\le i\le s$. The method $\Phi_h^*$ represents the adjoint method of $\Phi_h$.

 The problem is that under which circumstance, the two subsystems can be solved exactly and the Poisson integrators can be constructed. We identify the situation in which the two subsystems are solvable and the results are listed in the following theorem. To simplify the notations, we denote $(z_1,\cdots,z_n)=(p_1,\cdots,p_n)$ and $(z_{n+1},\cdots,z_{2n})=(q_1,\cdots,q_n)$.

 Here solvable means that each subsystem can be explicitly solved or solved as $2n$ algebraic equations.

\newtheorem{thm}{Theorem}
\begin{thm}
\label{theorem1}
The two subsystems are solvable in the following situation:

The matrix $R$ has the form of
 $$R=
\bordermatrix{&\cr
&O_n & A\cr
&-A^{\top} & O_n\cr
},$$
where $A=(a_{ij})_{n\times n}$, and $a_{ij}$'s are continuous functions of $p_i$ and $q_j$ for any $1\leqslant i,j\leqslant n$.
\end{thm}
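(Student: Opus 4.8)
The plan is to write each subsystem explicitly in the $(p,q)=((p_1,\dots,p_n),(q_1,\dots,q_n))$ splitting and to exploit two structural features: the block form of $R$ freezes half of the variables along each flow, and the entrywise dependence $a_{ij}=a_{ij}(p_i,q_j)$ decouples the surviving equations into scalar autonomous ODEs that are integrable by quadrature.

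First I would treat subsystem (\ref{subsystem1}). Since $H_1$ depends only on $p$, one has $\nabla_Z H_1=(\nabla_p H_1,0)^\top$, so the block structure gives
\begin{equation*}
R\nabla_Z H_1=\begin{pmatrix} O_n & A \\ -A^\top & O_n \end{pmatrix}\begin{pmatrix}\nabla_p H_1 \\ 0\end{pmatrix}=\begin{pmatrix} 0 \\ -A^\top\nabla_p H_1 \end{pmatrix}.
\end{equation*}
Hence $\dot{p}=0$ and $\dot{q}=-A^\top\nabla_p H_1$, so $p$ stays frozen at its initial value $p^0$. Writing the $j$-th component as $\dot{q}_j=-\sum_{i=1}^n a_{ij}\,\partial_{p_i}H_1$ and substituting $p=p^0$, every factor $a_{ij}(p_i^0,q_j)$ depends only on $q_j$ while each $\partial_{p_i}H_1(p^0)$ is a constant; thus the right-hand side is a function of $q_j$ alone.

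The crux of the argument is exactly this decoupling, and it hinges on the index matching built into the hypothesis: because the $(i,j)$ entry of $A$ depends on $p_i$ and $q_j$, freezing $p$ leaves the equation for $\dot{q}_j$ depending only on $q_j$, so no coupling among the different $q_k$ survives. Each resulting scalar equation $\dot{q}_j=f_j(q_j)$ is then solved by separation of variables, $\int dq_j/f_j(q_j)=t+C_j$, which determines $q_j(t)$ either in closed form or through a single algebraic relation; together with the $n$ trivial relations $p_i(t)=p_i^0$ this furnishes the $2n$ algebraic equations asserted in the statement. Subsystem (\ref{subsystem2}) is handled by the symmetric computation: now $R\nabla_Z H_2=(A\nabla_q H_2,0)^\top$, so $q$ is frozen and $\dot{p}_i=\sum_{j=1}^n a_{ij}\,\partial_{q_j}H_2$ reduces, after substituting $q=q^0$, to a scalar autonomous ODE $\dot{p}_i=g_i(p_i)$ integrable by the same quadrature.

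I do not expect a genuine analytic obstacle; the theorem is structural and the computation is essentially algebraic. The one point I would verify with care is the decoupling step — that freezing one half of the variables, combined with the precise dependence $a_{ij}=a_{ij}(p_i,q_j)$, really reduces the surviving system to $n$ independent scalar ODEs rather than a coupled one. I would also remark that continuity of the $a_{ij}$ guarantees a well-defined local scalar flow, so that the quadrature is meaningful (at least away from zeros of $f_j$ and $g_i$), which is all that ``solvable'' requires here.
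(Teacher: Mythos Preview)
Your proposal is correct and follows essentially the same route as the paper: freeze half the variables via the block structure, use the dependence $a_{ij}=a_{ij}(p_i,q_j)$ to decouple the remaining equations into independent scalar autonomous ODEs, and solve each by separation of variables. The paper's proof differs only in that it carries out a more detailed case analysis of the scalar quadrature $\int dq_i/f_i(q_i)=t$ (treating separately the cases $f_i(q_i(0))=0$, $>0$, $<0$, and whether the primitive blows up), which you acknowledge parenthetically but do not spell out.
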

\begin{proof}
We only consider solving (\ref{subsystem1}), and (\ref{subsystem2}) can be solved in the similar way.

Under these conditions, for any $1\leqslant i\leqslant n$, (\ref{subsystem1}) shows
$$\frac{dp_i}{dt}=0,~~\frac{dq_i}{dt}=-\sum\limits_{j=1}^na_{ji}(p_j,q_i)\frac{\partial H_1}{\partial p_j}.$$
 Thus we have $p_i\equiv p_i(0)$, which shows $\frac{\partial H_1}{\partial p_j}$, $1\le j\le n$ are all constants since $H_1$ is a function of all $p_j$'s. Therefore, $-\sum\limits_{j=1}^na_{ji}(p_j,q_i)\frac{\partial H_1}{\partial p_j}$ is just a function of $q_i$. Let $f_i(q_i)=-\sum\limits_{j=1}^na_{ji}(p_j,q_i)\frac{\partial H_1}{\partial p_j}$, i.e. $\frac{dq_i}{dt}=f_i(q_i)$, thus $\int_{q_i(0)}^{q_i(t)}\frac{1}{f_i(q)}dq=t$. Then we discuss how to solve the integral equation for each $q_i$ in the following three cases.

(i) If $f_i(q_i(0))=0$, we know $q_i\equiv q_i(0)$ is a solution of (\ref{subsystem1}). The solution is unique when $f_i$ is Lipschitz continuous.

(ii) If $f_i(q_i(0))>0$, then $q_i(t)>q_i(0)$ for some small $t$ since $t>0$. Let $s$ to be the smallest number of $q$ satisfying $f_i(q)=0$(set $s=+\infty$ if $f_i>0$ on $(q_i(0),+\infty)$), and take $F_i(x)=\int_{q_i(0)}^x\frac{1}{f_i(q)}dq,x\in(q_i(0),s)$, then $F_i(q_i(t))=t$. Furthermore, since $f_i(q_i(0))>0$ on $(q_i(0),s)$, we know $F_i$ is strictly increasing on $(q_i(0),s)$, so it has an inverse function $F_i^{-1}(x)$. Now we make a further classification:

$1^{\comp}$ If $F_i(s)=+\infty$, no matter $s$ is finite or not, we  always have $q_i(t)=F_i^{-1}(t)$;

$2^{\comp}$ If both $s$ and $F_i(s)$ are finite, then we have $q_i(t)=F_i^{-1}(t)$ when $t\in(0,F_i(s))$. As $f_i(s)=0$($f_i$ is continuous), it is similar to the case (i) when $f_i(q_i(0))=0$. So we have one unique solution on $(F_i(s),+\infty)$, i.e. $q_i(t)\equiv s$, when $f_i$ is Lipschitz continuous on $[s,+\infty)$.

$3^{\comp}$ If $s=+\infty$ but $F_i(s)$ is finite, then we have $q_i(t)=F_i^{-1}(t)$ when $t\in(0,F_i(s))$. When $t\geqslant F_i(s)$, $q_i(t)$ does not exist.

(iii) If $f_i(q_i(0))<0$, the discussion is similar to the case (ii).
\end{proof}

\subsection{Poisson systems that are separated into $n+1$ subsystems}
\label{sec3.2}

We consider the Poisson system of $2n$ dimension and denote by $(z_1,\cdots,z_n)=(p_1,\cdots,p_n)$ and $(z_{n+1},\cdots,z_{2n})=(q_1,\cdots,q_n)$. We assume that the Hamiltonian $H$ can be separated into $H=H_1(p_1,\cdots,p_n)+H_2(q_1)+H_3(q_2)+\cdots+H_{n+1}(q_n)$, then the Poisson system can be separated into $n+1$ subsystems
\begin{equation}\label{2ndsubsystem1}
\frac{dZ}{dt}=R(Z)\nabla_{Z} H_1,
\end{equation}
\begin{equation}
\label{2ndsubsystem2}
\frac{dZ}{dt}=R(Z)\nabla_{Z} H_2,
\end{equation}
\begin{equation*}
\cdots\cdots\cdots
\end{equation*}
\begin{equation}
 \label{2ndsubsystem3}
\frac{dZ}{dt}=R(Z)\nabla_{Z} H_{n+1}.
\end{equation}

We identify the situation where the above $n+1$ subsystems are solvable and the Poisson Integrators can be constructed. Here the meaning of solvable is the same as before in Section \ref{sec3.1}.

\begin{thm}\label{theorem2}
All the above $n+1$ subsystems are solvable in the following situation:

The matrix $R$ has the form of
 $$R=
\bordermatrix{&\cr
&O_n & A\cr
&-A^{\top} & C\cr
},$$
where the matrix $A=(a_{ij})_{n\times n}$ and $C=(c_{ij})_{n\times n}$. The elements $a_{ij}$'s are just continuous functions of $q_j$ and the elements $c_{ij}$'s are continuous functions of $p_1,p_2,\cdots,p_n$ and $q_j$ for any $1\leqslant i,j\leqslant n$.
\end{thm}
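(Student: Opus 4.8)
The plan is to treat the two kinds of subsystems separately, after first extracting the component form of each flow and recording the one structural fact I intend to exploit. Writing $Z=(P,Q)$ with $P=(p_1,\dots,p_n)$ and $Q=(q_1,\dots,q_n)$, the block form of $R$ gives, for any Hamiltonian $K$,
\begin{equation*}
\frac{dp_i}{dt}=\sum_{j=1}^n a_{ij}\frac{\partial K}{\partial q_j},\qquad \frac{dq_i}{dt}=-\sum_{j=1}^n a_{ji}\frac{\partial K}{\partial p_j}+\sum_{j=1}^n c_{ij}\frac{\partial K}{\partial q_j}.
\end{equation*}
Since $R$ is the structure matrix of a Poisson system it is skew-symmetric, and in the chosen block form this forces $C$ to be skew-symmetric, so that $c_{ii}\equiv 0$ for every $i$. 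I would state this at the outset because it is the fact that makes the second family of subsystems tractable.

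For the first subsystem (\ref{2ndsubsystem1}), where $K=H_1(p_1,\dots,p_n)$, I would substitute $\partial H_1/\partial q_j\equiv 0$ to obtain $\dot p_i=0$, so each $p_i\equiv p_i(0)$ is frozen and every $\partial H_1/\partial p_j$ becomes a constant. The $c$-terms drop out as well, leaving $\dot q_i=-\sum_j a_{ji}(q_i)\,\partial H_1/\partial p_j$, where I use that $a_{ji}$ depends on $q_i$ alone. This is a decoupled family of scalar autonomous equations $\dot q_i=f_i(q_i)$, identical in form to the one produced in Theorem~\ref{theorem1}; I would close this case by invoking verbatim the three-case separation-of-variables analysis already carried out there.

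The subsystems (\ref{2ndsubsystem2})--(\ref{2ndsubsystem3}), where $K=H_{k+1}(q_k)$ for $k=1,\dots,n$, are where the real work lies, and here the vanishing of $c_{kk}$ is decisive. Since only $\partial H_{k+1}/\partial q_k=H_{k+1}'(q_k)$ survives, the $q_k$-equation reads $\dot q_k=c_{kk}H_{k+1}'(q_k)=0$, so $q_k\equiv q_k(0)$ is constant and $H_{k+1}'(q_k)$ is a fixed scalar. Feeding this back, $\dot p_i=a_{ik}(q_k)H_{k+1}'(q_k)$ is constant in time, so each $p_i$ is an explicit affine function of $t$. Finally, for $i\neq k$ the equation $\dot q_i=c_{ik}(P,q_k)H_{k+1}'(q_k)$ has a right-hand side that is now a known continuous function of $t$ (the arguments $P=P(t)$ are affine and $q_k$ is frozen), so $q_i$ is recovered by the single quadrature $q_i(t)=q_i(0)+H_{k+1}'(q_k)\int_0^t c_{ik}(P(s),q_k)\,ds$, which is solvable in the sense of Section~\ref{sec3.1}.

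The step I expect to be the crux is the observation that skew-symmetry of $R$ kills the diagonal of $C$: it is precisely $c_{kk}=0$ that freezes $q_k$ and thereby decouples what would otherwise be a genuinely coupled nonlinear block in $(P,q_k)$, collapsing the whole subsystem to affine motion in $P$ together with quadratures in the remaining $q_i$. Once this is in place, the rest of the argument is routine bookkeeping plus the already-established analysis of Theorem~\ref{theorem1}.
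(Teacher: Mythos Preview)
Your proof is correct and follows essentially the same route as the paper's: freeze the $p$-variables in the $H_1$ subsystem and invoke the scalar analysis of Theorem~\ref{theorem1}, then for each $H_{k+1}(q_k)$ subsystem freeze $q_k$, solve $p_i$ affinely, and recover the remaining $q_i$ by a single quadrature. The only difference is that you make explicit the role of $c_{kk}=0$ (from the skew-symmetry of $R$, hence of $C$) in forcing $\dot q_k=0$, whereas the paper simply writes $\frac{dq_1}{dt}=0$ without comment; your version is clearer on this point but otherwise identical in structure.
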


\begin{proof}
Here we only proof how to solve the subsystem (\ref{2ndsubsystem1}) and (\ref{2ndsubsystem2}).

 For the subsystem (\ref{2ndsubsystem1})
\begin{equation}\label{subsystem2nd3}
\left\{
\begin{split}
\frac{dp_i}{dt}&=0,\quad 1\le i\le n\\
\frac{dq_{j}}{dt}&=-\sum_{k=1}^n a_{kj}(q_j)\frac{\partial H_1}{\partial p_k},\quad 1\le j\le n.
\end{split}
\right.
\end{equation}
As $p_1, p_2,\cdots,p_n$ are all constants and $H_1$ is a continuous function of all $p_k$'s, then $\frac{\partial H_1}{\partial p_k}, 1\le k\le n$ are also constants. Therefore, $-\sum_{k=1}^n a_{kj}(q_j)\frac{\partial H_1}{\partial p_k}, 1\le j \le n$ are just continuous functions of $q_j$. According to the proof in Theorem \ref{theorem1}, each $q_j$ for $1\le j \le n$ is solvable.

The subsystem (\ref{2ndsubsystem2}) is
\begin{equation}\label{subsystem2nd4}
\left\{
\begin{split}
\frac{dp_i}{dt}&=a_{i1}(q_1)\frac{\partial H_2}{\partial q_1},\quad 1\le i\le n\\
\frac{dq_{1}}{dt}&=0,\\
\frac{dq_j}{dt}&=c_{j1}(p_1,p_2,\cdots,p_n,q_1)\frac{\partial H_2}{\partial q_1},\quad 2\le j\le n.
\end{split}
\right.
\end{equation}
From the $n+1$-th equality of Equation (\ref{subsystem2nd4}), we derive that $q_1(t)\equiv q_{10}$ where $q_{10}$ is the initial value. Then we can easily know that $a_{i1}(q_1)\frac{\partial H_2}{\partial q_1}, 1\le i\le n$ are all constants. As a result, each $p_i$ for $1\le i \le n$ can be solved explicitly. As the time derivative of $q_j, 2\le j\le n$ does not depend on $q_j$, and all $p_i$'s can be solved explicitly, therefore $q_i$ can be solved exactly with a given initial value $q_{j0}$, i.e.
$$
q_j(t)=q_{j0}+\frac{\partial H_2}{\partial q_{10}}\int_0^t c_{j1}(p_1(\xi),p_2(\xi),\cdots, p_n(\xi),q_{10})d\xi, \quad 2\le j\le n.
$$
The way to solve the other subsystem is similar to that of the subsystem (\ref{2ndsubsystem2}). The proof is completed.
\end{proof}

The case that the Hamiltonian $H$ can be separated into $H=H_1(p_1)+H_2(p_2)+\cdots+H_{n}(p_n)+H_{n+1}(q_1,q_2,\cdots,q_n)$ is similar, the Poisson system can be separated into $n+1$ subsystems as well. For such a case, we easily know that if the matrix $R$ is of the form
$$R=
\begin{pmatrix}
C & A\\
-A^{\top}& O
\end{pmatrix}$$
with the matrix $A=(a_{ij})_{n\times n}$ and $C=(c_{ij})_{n\times n}$ being the situation in Theorem \ref{theorem2}, then the $n+1$ subsystems are all solvable.

\subsection{Poisson systems that are separated into $m$ subsystems}

In this subsection, we consider arbitrary dimensional Poisson system. If the Hamiltonian $H$ of the $m$ dimensional Poisson system is totally separable with respect to each argument $z_i$, i.e. $H(z_1,z_2,\cdots,z_m)=H_1(z_1)+H_2(z_2)+\cdots+H_m(z_m)$, then the Poisson system can be separated into $m$ subsystems
\begin{equation}\label{subsystem3}
\frac{dZ}{dt}=R(Z)\nabla_{Z} H_1,
\end{equation}
\begin{equation}\label{subsystem4}
\frac{dZ}{dt}=R(Z)\nabla_{Z} H_2,
\end{equation}
$$
\cdots\cdots\cdots
$$
\begin{equation}\label{subsystem5}
\frac{dZ}{dt}=R(Z)\nabla_{Z} H_m.
\end{equation}
We identify the situation in which the above $m$ subsystems are solvable and the Poisson Integrators can be constructed. Here the meaning of solvable is the same as before in Section \ref{sec3.1}.

\begin{thm}\label{theorem3}
In the following situation the above $m$ subsystems are solvable:

The skew-symmetric matrix $R$ has the form of
 $$R=
\begin{pmatrix}
0	  &	  r_{12}(z_1,z_2)&	  r_{13}(z_1,z_2) & \cdots &r_{1m}(z_1,z_m)  \\	
-r_{12}(z_1,z_2)&0  &	  r_{23}(z_2,z_3) & \cdots &	  r_{2m}(z_2,z_m)\\
-r_{13}(z_1,z_3)&-r_{23}(z_2,z_3) &0 & \cdots &	  r_{3m}(z_3,z_m)\\
\vdots &\vdots & \vdots & \cdots & \vdots\\
-r_{1m}(z_1,z_m) & -r_{2m}(z_2,z_m) & -r_{3m}(z_3,z_m) & \cdots & 0
\end{pmatrix}$$
where $R=(r_{ij})_{n\times n}$, $r_{ij}$'s are continuous functions of $z_i$ and $z_j$ for any $1\leqslant i,j\leqslant n$.
\end{thm}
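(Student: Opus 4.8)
The plan is to solve the $k$-th subsystem $\frac{dZ}{dt}=R(Z)\nabla_Z H_k$ for an arbitrary $1\le k\le m$; every other subsystem is handled identically. Since $H_k$ depends only on $z_k$, the gradient $\nabla_Z H_k$ has all entries zero except the $k$-th, which equals $H_k'(z_k)$. Writing $R_{ik}$ for the $(i,k)$ entry of $R$ --- which, by the assumed form of the matrix, is a continuous function of $z_i$ and $z_k$ alone --- the $i$-th scalar equation of the subsystem reduces to
\begin{equation*}
\frac{dz_i}{dt}=R_{ik}(z_i,z_k)\,H_k'(z_k),\quad 1\le i\le m.
\end{equation*}

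First I would observe that the equation for $z_k$ itself reads $\frac{dz_k}{dt}=R_{kk}\,H_k'(z_k)=0$, since the diagonal entries of the skew-symmetric matrix $R$ vanish. Hence $z_k$ is conserved along this flow, $z_k(t)\equiv z_k(0)$, and consequently $H_k'(z_k)\equiv H_k'(z_k(0))$ is a constant.

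Next, freezing $z_k$ at its initial value turns each remaining equation ($i\neq k$) into the autonomous scalar ODE
\begin{equation*}
\frac{dz_i}{dt}=R_{ik}\bigl(z_i,z_k(0)\bigr)\,H_k'(z_k(0))=:g_i(z_i),
\end{equation*}
whose right-hand side depends only on $z_i$. This is exactly the separable scalar problem $\frac{dz_i}{dt}=g_i(z_i)$ already analyzed in the proof of Theorem \ref{theorem1}, so I would invoke the same integral equation $\int_{z_i(0)}^{z_i(t)} g_i(z)^{-1}\,dz=t$ together with its three-case discussion to conclude that each $z_i$ is solvable.

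The structural hypotheses do the essential work, so I do not expect a serious obstacle: skew-symmetry of $R$ makes $z_k$ a conserved quantity under the $k$-th flow, and the assumption that each entry $r_{ij}$ depends only on $z_i$ and $z_j$ guarantees that, once $z_k$ is frozen, the system fully decouples into $m-1$ independent scalar autonomous equations. The one point needing care is confirming that no coupling among the $z_i$ survives --- this is immediate because $\nabla_Z H_k$ selects precisely the single column of $R$ indexed by $k$, so the $i$-th equation involves only $R_{ik}(z_i,z_k)$. I would therefore reduce everything to Theorem \ref{theorem1} rather than repeat the integration details.
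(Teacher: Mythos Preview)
Your proposal is correct and follows essentially the same approach as the paper: the paper also fixes one subsystem (it takes $k=1$), observes that $\frac{dz_1}{dt}=0$ from the vanishing diagonal, concludes that $\partial H_1/\partial z_1$ is constant so each remaining equation becomes a scalar autonomous ODE in $z_i$ alone, and then defers to the case analysis in Theorem~\ref{theorem1}. The only cosmetic difference is that you treat a general index $k$ whereas the paper treats $k=1$ and remarks that the other subsystems are similar.
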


\begin{proof}
We only consider solving the subsystem (\ref{subsystem3}), the other $m-1$ subsystems can be solved similarly.
As the Hamiltonian $H_1$ of the subsystem (\ref{subsystem3}) just depends on $z_1$, then the subsystem can be written as
\begin{equation}\label{subsystem11}
\left\{
\begin{split}
\frac{dz_1}{dt}&=0,\\
\frac{dz_i}{dt}&=-r_{1i}(z_1,z_i)\frac{\partial H_1}{\partial z_1},\quad 2\le i\le m.
\end{split}
\right.
\end{equation}
The first equation of (\ref{subsystem11}) implies that $z_1=Const$, then $\frac{\partial H_1}{\partial z_1}$ is also a constant. As $z_1$ is a constant, then $r_{1i}(z_1,z_i), 2\le i\le m$ in (\ref{subsystem11}) are just continuous functions of $z_i$. Therefore, according to the proof in Theorem \ref{theorem1}, $z_2,z_3,\cdots,z_m$ in (\ref{subsystem11}) are all solvable.
\end{proof}

We will construct the Poisson integrators for two Poisson systems to verify our theoretical results. The numerical results will be shown in Section \ref{numerexper}.

\section{Two Poisson systems}

\subsection{Charged particle system}

Dynamics of charged particles\cite{Zhou,LiT,Zhang2} in external electromagnetic fields plays a fundamental role in plasma physics. The fast gyromotion and the slow gyrocenter motion
constitute the two components of the dynamics of one charged particle in  magnetized plasma. If one averages out the fast gyromotion from the charged particle motion,
the behaviour of gyrocenters is governed by gyrokinetics and related theories. The motion of the charged particle in a given electromagnetic field $(E(X), B(X))$ is governed by the Lorentz force law. If we denote the position variable of the charged particle by $X$ and its velocity by $V$, then the charged particle motion can be expressed as a 6 dimensional Poisson system under the variable $Z=(X,V)^{\top}=(x_1,x_2,x_3,v_1,v_2,v_3)^{\top}$
$$
Z=R(Z)\nabla H(Z)
$$
where
$$
R(Z)=\bordermatrix{&\cr
&O & \frac{I}{m}\cr
&-\frac{I}{m} & -\frac{q\hat{B}(X)}{m^2}\cr
},
$$
and the Hamiltonian is $H(X,V)=mv_1^2/2+mv_2^2/2+mv_3^2/2+q\varphi(X)$ with the scalar potential $\varphi(X)$. The electronic field is $E(X)=-\nabla \varphi$, the magnetic field is $B(X)=(B_1(X),B_2(X),B_3(X))$ and
 the matrix $\hat{B}(X)$ is
 $$
 \hat{B}(X)=\bordermatrix{&\cr
 & 0 &-B_3(X) & B_2(X)\cr
 & B_3(X) & 0 & -B_1(X)\cr
 & -B_2(X) & B_1(X) &0 \cr
 }.
 $$

\subsection{Gyrocenter system}

We then introduce the gyrocenter system \cite{Qin,Zhang,Zhu2} with the variable $Z=(X,u)^{\top}$, where $X=(x,y,z)^\top$ is the 3-dimensional position variable of the gyrocenter. Note that $A(X)$ is the vector potential of the magnetic field, and $B(X)$ is the magnetic field.
The relationship between $A(X)$ and $B(X)$ is $B(X)=\nabla\times A(X)$.

We assume that $A(X)=(f,g,h)^\top$ where $f,g,h$ are all smooth functions of the three arguments $x,y,z$. The notation $f_x$ represents the derivative of $f$ with respect to $x$. Then $B(X)=\nabla\times A(X)=(h_y-g_z,f_z-h_x,g_x-f_y)^\top$. The unit vector along the direction of the magnetic field is $b(X)=(b_1,b_2,b_3)^\top=\dfrac{B(X)}{|B(X)|}$.

The Lagrangian of the gyrocenter system
\begin{equation}\nonumber
L(X,\dot{X},u,\dot{u})=[A(X)+ub(X)]\cdot\dot{X}-[\frac{1}{2}u^{2}+\mu B(X)+\varphi(X)],
\end{equation}
is first given by Littlejohn\cite{Littlejohn}. The Euler-Lagrange equations of the Lagrangian with respect to $X$ and $u$ result in the gyrocenter motion which can be expressed as
\begin{equation}\label{KVH}
K({Z})\dot{{ Z}}=\nabla H({Z}),
\end{equation}
where $H(Z)=\frac{1}{2}u^{2}+\mu |B(X)|+\varphi({X})$ is the Hamiltonian with the scalar potential $\varphi({X})$, and the skew-symmetric matrix $K(Z)$ is
\begin{equation}\nonumber
K(Z)=\bordermatrix{&\cr
 &0&a_{12}&a_{13}&-b_{1}\cr
 &-a_{12}&0&a_{23}&-b_{2}\cr
 &-a_{13}&-a_{23}&0&-b_{3}\cr
 &b_{1}&b_{2}&b_{3}&0\cr
 }
\end{equation}
with the elements being
$$a_{12}=g_x-f_y+u(\dfrac{\partial b_2}{\partial x}-\dfrac{\partial b_1}{\partial y}),$$ $$a_{13}=h_x-f_z+u(\dfrac{\partial b_3}{\partial x}-\dfrac{\partial b_1}{\partial z}),$$ $$a_{23}=h_y-g_z+u(\dfrac{\partial b_3}{\partial y}-\dfrac{\partial b_2}{\partial z}),$$
If the matrix $K(Z)$ is invertible, i.e. $\det(K(Z))=\left|a_{13}b_1-a_{13}b_2+a_{12}b_3\right|^2\neq 0$, then the gyrocenter system (\ref{KVH}) becomes a Poisson system with
$$R(Z)=\dfrac{1}{a_{12}b_3-a_{13}b_2+a_{23}b_1}
\begin{pmatrix}
0 & -b_3 & b_2 & a_{23} \\ b_3 & 0 & -b_1 & -a_{13} \\
-b_2 & b_1 & 0 & a_{12} \\ -a_{23} & a_{13} & -a_{12} & 0
\end{pmatrix}.$$

\section{Numerical Experiments}
\label{numerexper}

\subsection{Numerical methods}

Denote $\Phi_h$ by the first order Poisson integrator which is composed by the exact solution of the subsystems. Five numerical methods will be applied to do numerical simulation for the above two Poisson systems.

2ndEPI: the second order Poisson integrator\cite{Strang}, which is the composition of $\Phi_h$ and its adjoint method
$$
\Psi_h^2\equiv \Phi_{h/2}^*\circ \Phi_{h/2}.
$$

4thEPI1: the fourth order Poisson integrator, which is
$$
\Psi_h^4\equiv \Phi_{\alpha_5h}\circ \Phi_{\beta_5 h}^*\circ \cdots \circ \Phi_{\beta_2 h}^*\circ \Phi_{\alpha_1 h}\circ \Phi_{\beta_1 h}^*.
$$
The values of the parameters $\alpha_1,\beta_1,\cdots,\alpha_5,\beta_5$ are given in \cite{McLachlan}.

4thEPI2: the fourth order Poisson integrator, which is
$$
\Upsilon_h^4\equiv \Phi_{\alpha_6h}\circ \Phi_{\beta_6 h}^*\circ \cdots \circ \Phi_{\beta_2 h}^*\circ \Phi_{\alpha_1 h}\circ \Phi_{\beta_1 h}^*
$$
The values of the parameters $\alpha_1,\beta_1,\cdots,\alpha_6,\beta_6$ are given in \cite{Blanes}.

4thloba: the fourth order Runge-Kutta method based on the Lobatto quadrature\cite{Butcher1}. We denote this method by $L_h^4$.

6thloba: the sixth order Runge-Kutta method based on the Lobatto quadrature\cite{Butcher2}. We denote this method by $L_h^6$.
%

To show the advantages of the Poisson integrators in structure preservation, we compare them with the numerical methods two orders higher than theirs. We will compare the second order Poisson integrator 2ndEPI with the fourth order Runge-Kutta method 4thloba. Two fourth order Poisson Integrators 4thEPI1 and 4thEPI2 will be compared with the sixth order Runge-Kutta method 6thloba. Their behaviors in preserving the phase orbit and the energy of the system will be demonstrated in the next subsection.

\subsection{Numerical experiments for charged particle system}

We here report a few numerical experiments for two instances of the motion of one charged particle.

\textbf{Example 1:}
We choose the magnetic field to be
$B(\textbf{X})=[0,0,x_1^2+x_2^2]^{\top}$. The electronic field is set to be $E(\textbf{X})=\frac{10^{-3}}{\left(\sqrt{x_1^2+x_2^2}\right)^{3}}[x_1,x_2,0]^{\top}$ and the constants $m$ and $q$ are both set to be 1. Thus the Hamiltonian is
$H=\frac{1}{2}(v_1^2+v_2^2+v_3^2)+\frac{10^{-3}}{\sqrt{x_1^2+x_2^2}}$. Because the Hamiltonian is of the same form as the case of Section \ref{sec3.2}, we can separate the original system into four subsystems with $H_1=\frac{1}{2}v_1^2$, $H_2=\frac{1}{2}v_2^2$, $H_3=\frac{1}{2}v_3^2$ and $H_4=\frac{10^{-3}}{\sqrt{x_1^2+x_2^2}}$, respectively. It can be easily verified that under the above magnetic field, the matrix $R(Z)$ in the charged particle system satisfies the requirements in Theorem \ref{theorem2}, thus all the subsystems can be solved exactly.

For the first subsystem with $H_1=\frac{1}{2}v_1^2$, the variable $v_2(t)$ can be solved as
\begin{eqnarray*}
v_2(t)&=&v_{20}-v_{10}\int_0^t B_3(x_{10}+\xi v_{10},x_{20},x_{30})d\xi\\
&=&v_{20}-x_{10}v_{10}^2 t^2-x_{10}^2v_{10}t-\frac{1}{3}v_{10}^3t^3-v_{10}x_{20}^2 t,
\end{eqnarray*}
where $v_{10}, v_{20}, x_{10}, x_{20}, x_{30}$ represent the initial values of $v_1, v_2, x_1, x_2, x_3$. As all the subsystems can be solved explicitly, we can construct the explicit Poisson integrators.

The initial condition for the numerical simulation is chosen as $x_0=[0.5,-1,0]^{\top}$, $v_0=[0.1,0.1,0]^{\top}$. The numerical results for the five numerical methods are displayed in Figure \ref{epmfigure1}-\ref{epmfigure3}. We first simulate the charged particle motion using the methods 2ndEPI, 4thEPI2, 4thloba and 6thloba. The orbits in $x_1-x_2$ plane are displayed in Figure \ref{epmfigure1}. We can see that the orbits obtained by the two Poisson integrators 2ndEPI, 4thEPI2 are more accurate than the two Runge-Kutta methods 4thloba, 6thloba. Especially, the orbit obtained by the 4thloba method is much coarser than the 2ndEPI and 4thEPI2 method. To illustrate the order of the Poisson integrators, we
plot in Figure \ref{epmfigure2} the global errors of the variables $X=(x_1,x_2,x_3)$ and $V=(v_1,v_2,v_3)$. It is clearly shown that the 2ndEPI method is of order 2 and the 4thEPI1 and 4thEPI2 methods are of order 4. The global errors of the two fourth methods 4thEPI1, 4thEPI2 are nearly the same. The evolutions of the energy using different methods are shown in Figure \ref{epmfigure3}. The energy error of the second order Poisson integrator 2ndEPI can be bounded in a small interval while the energy error of the fourth order Runge-Kutta method 4thloba increases linearly along time. It can seen from Figure \ref{epmfigure3} that the energy errors obtained by the two fourth order Poisson integrators 4thEPI1, 4thEPI2 are much smaller than that of the sixth order Runge-Kutta method 6thloba. The energy errors of the methods 4thEPI1 and 4thEPI2 can be preserved at a very small number over long time, but that of the 6thloba method increases without bound. The numerical results clearly show the advantages of the Poisson integrators in tracking the phase orbit, preserving the energy over long time compared with the higher order Runge-Kutta methods.

\begin{figure}
\begin{center}
\subfigure[ ]{
\includegraphics[width=0.48\textwidth]{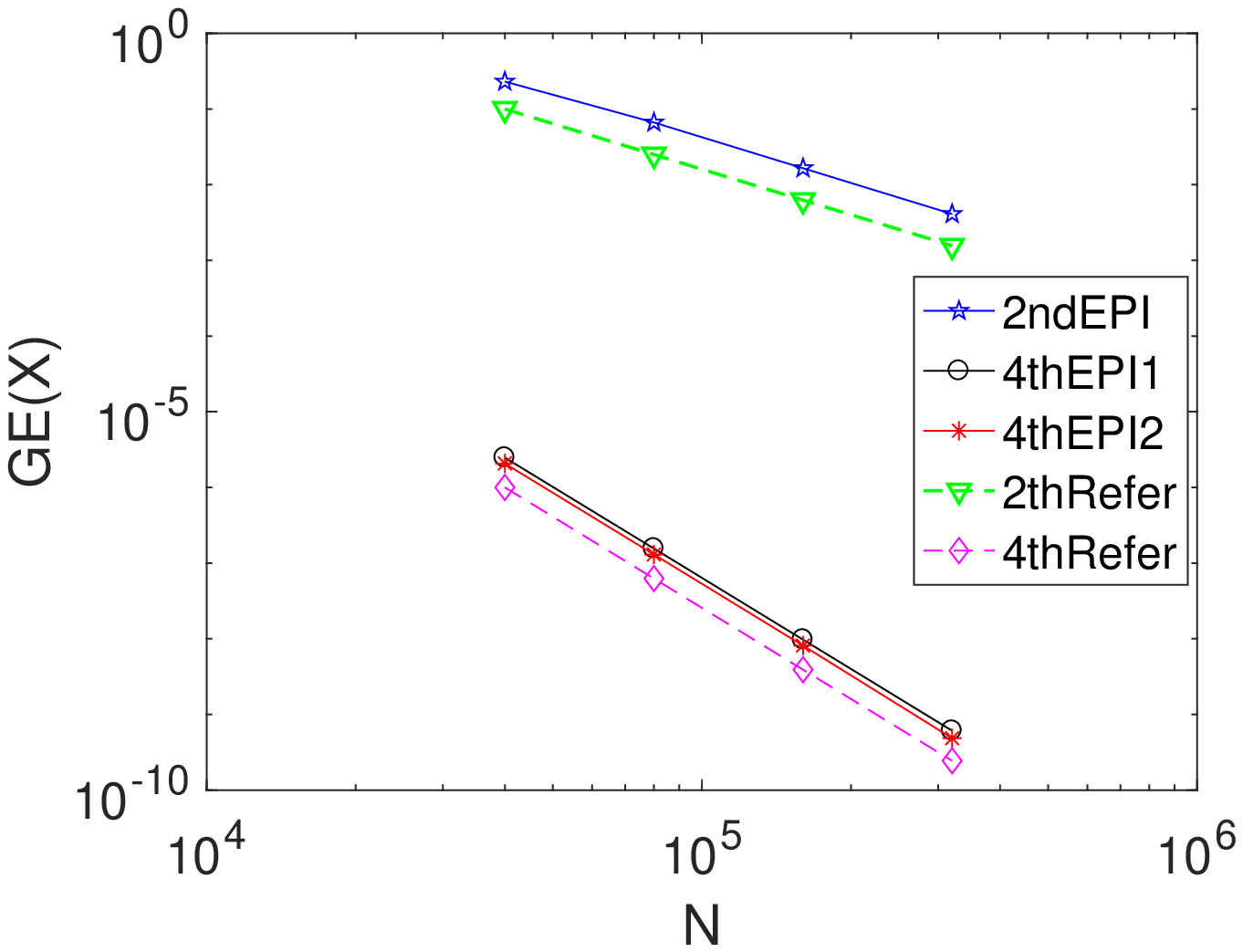}}
\subfigure[ ]{
\includegraphics[width=0.48\textwidth]{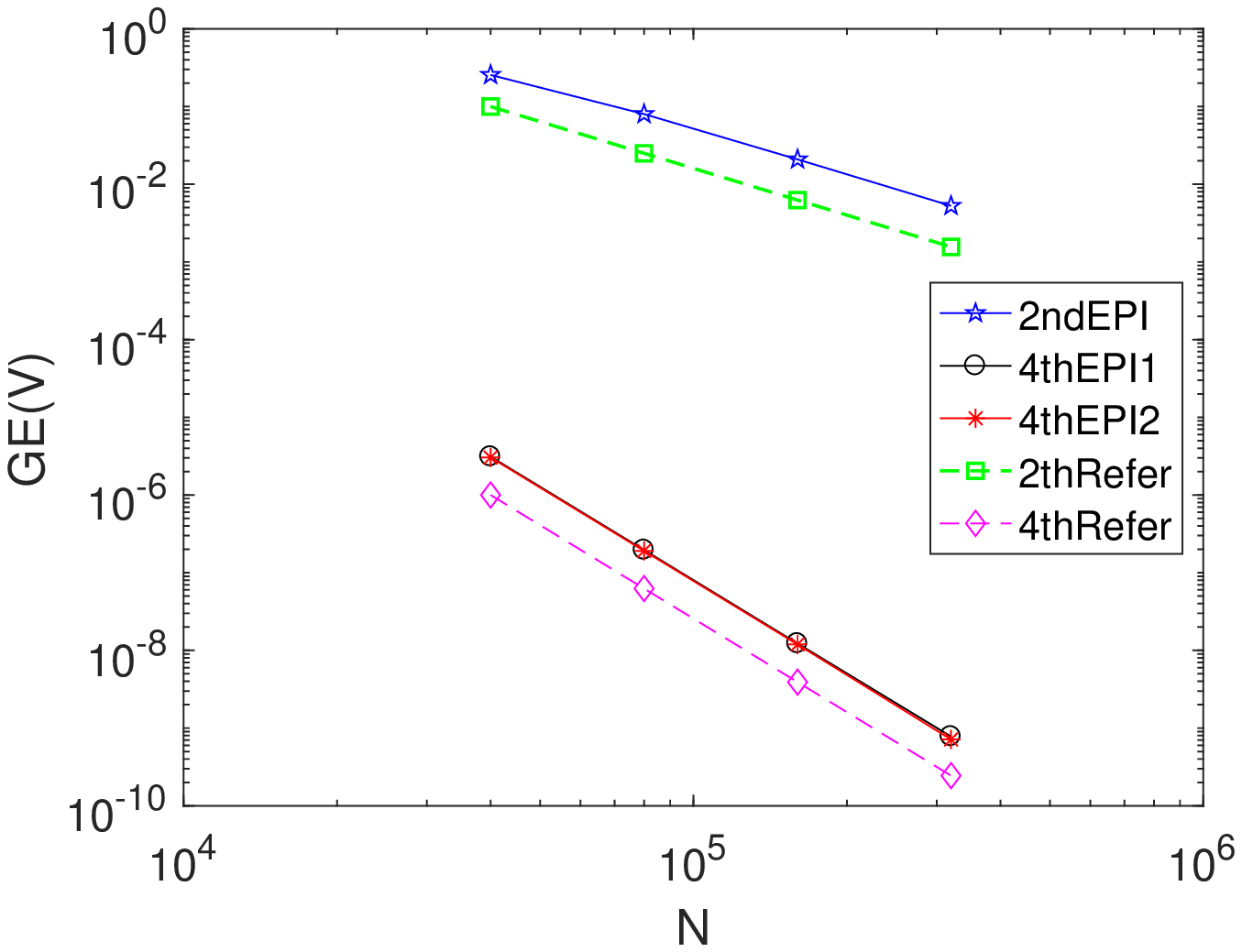}}
\end{center}
\caption{The global errors of $X$ and $V$ against
the time steps $N$ for methods 2ndEPI, 4thEPI1 and 4thEPI2 under different stepsize $h=\pi/20/2^i (i=1,2,3,4)$ in Example 1 of the charged particle system. Here the final time $T=1000\pi$. $GE(X)=max_{1\le i\le N} \parallel X_i \parallel_2$. Dashed lines are the
reference lines showing the corresponding convergence orders.
Subfigures (a) shows the global errors of the variable
$X$ while subfigures (b) shows the global errors of the variable $V$.}
\label{epmfigure1}
\end{figure}

\begin{figure}
\begin{center}
\subfigure[ ]{
\includegraphics[width=0.48\textwidth]{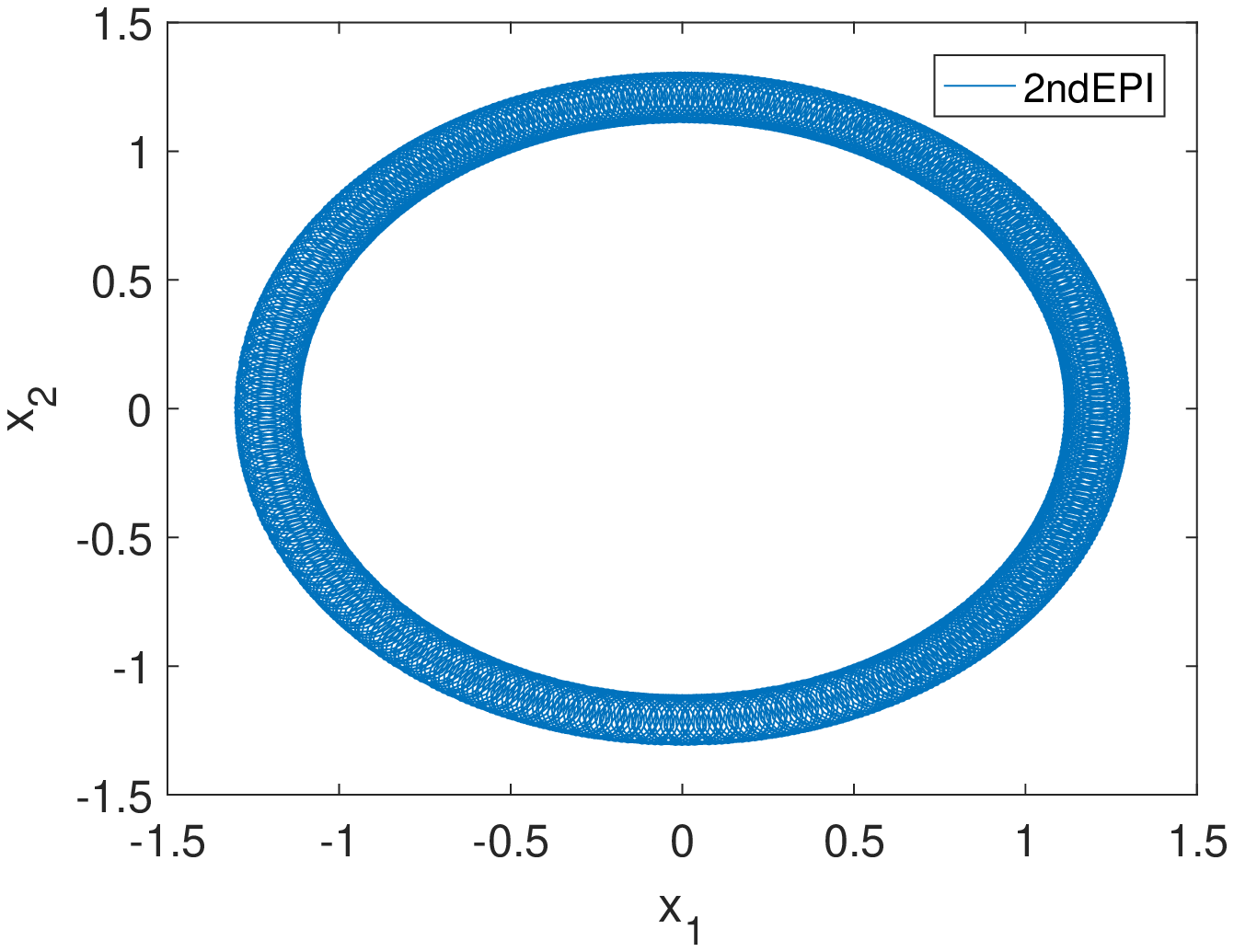}}
\subfigure[ ]{
\includegraphics[width=0.48\textwidth]{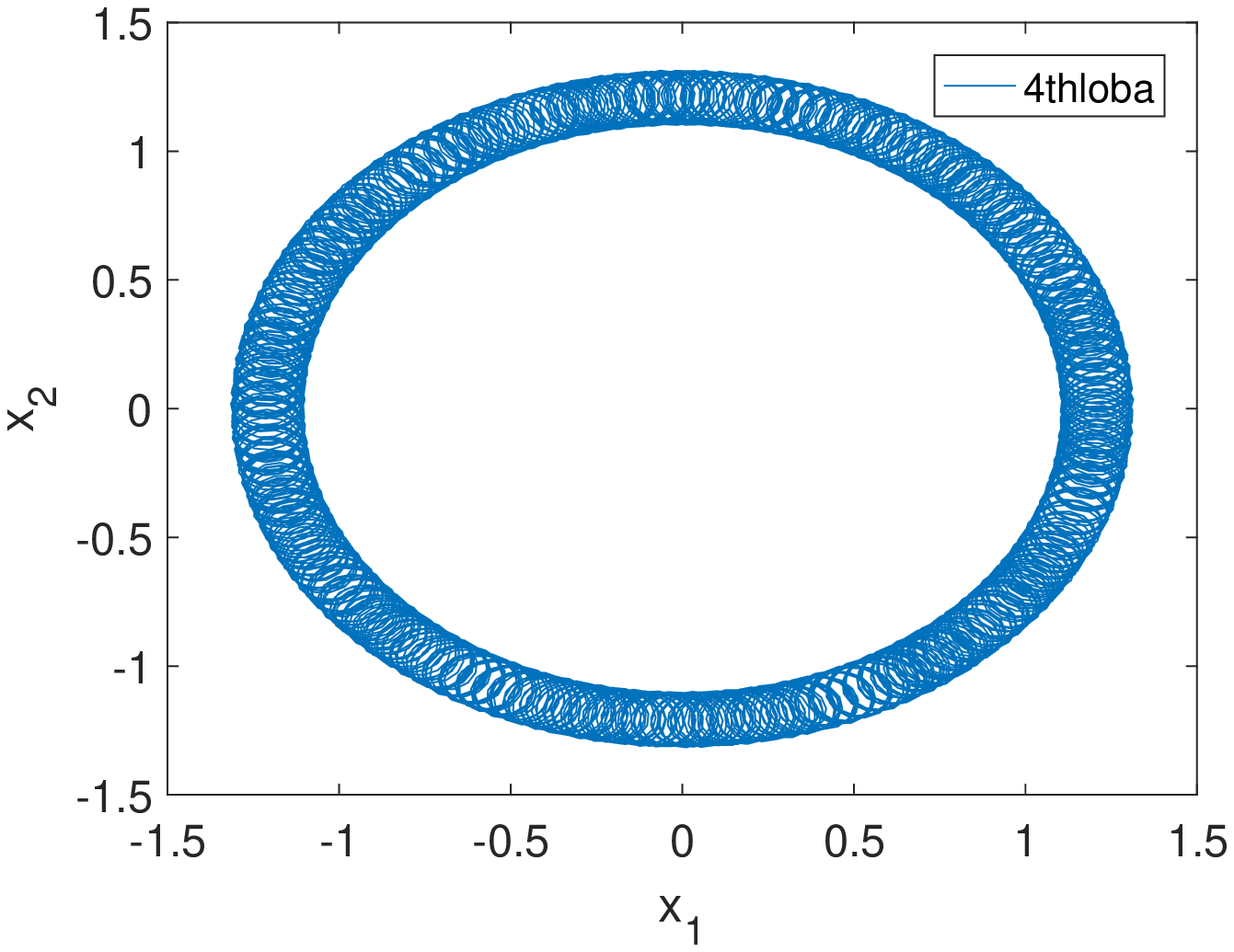}}
\subfigure[ ]{
\includegraphics[width=0.48\textwidth]{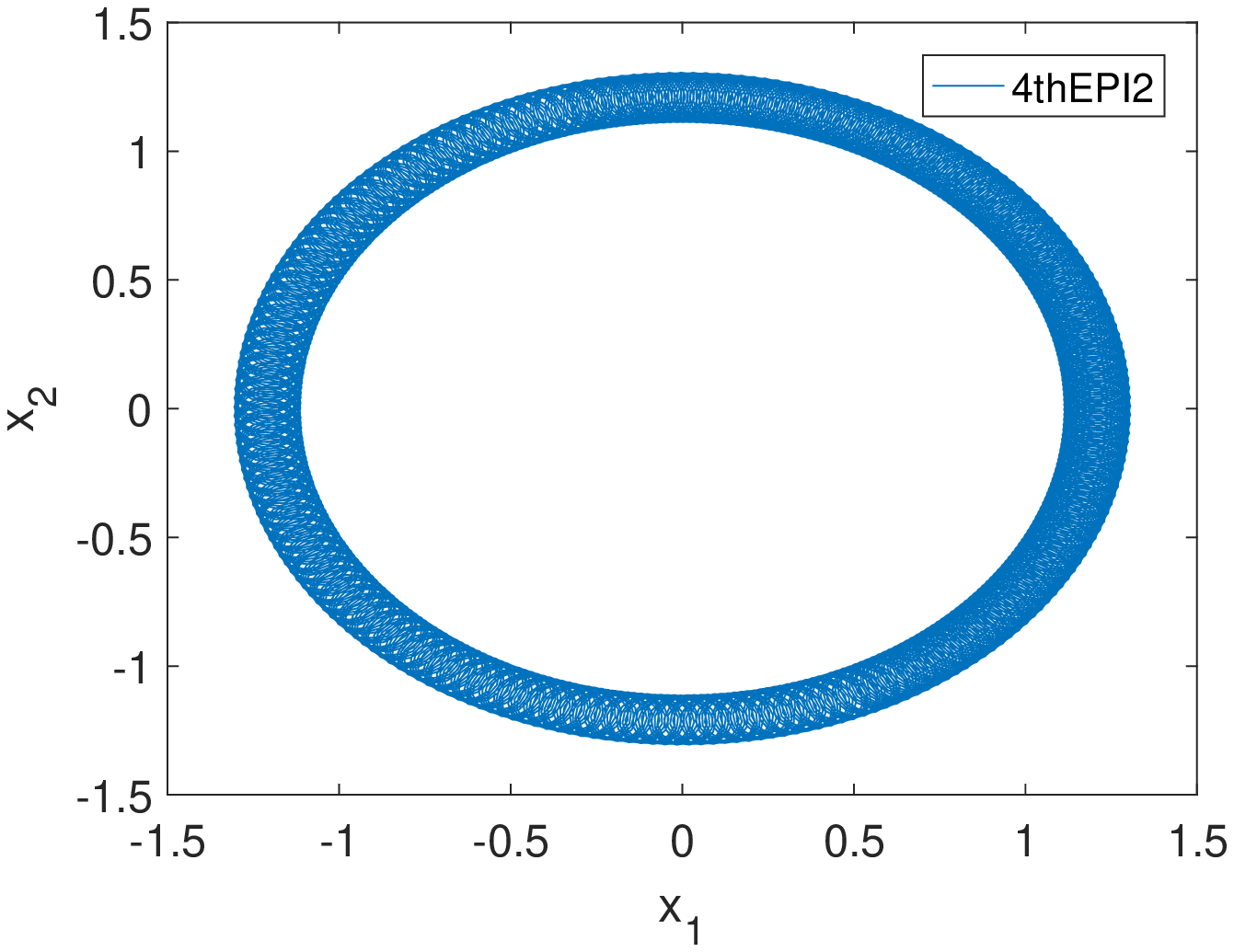}}
\subfigure[ ]{
\includegraphics[width=0.48\textwidth]{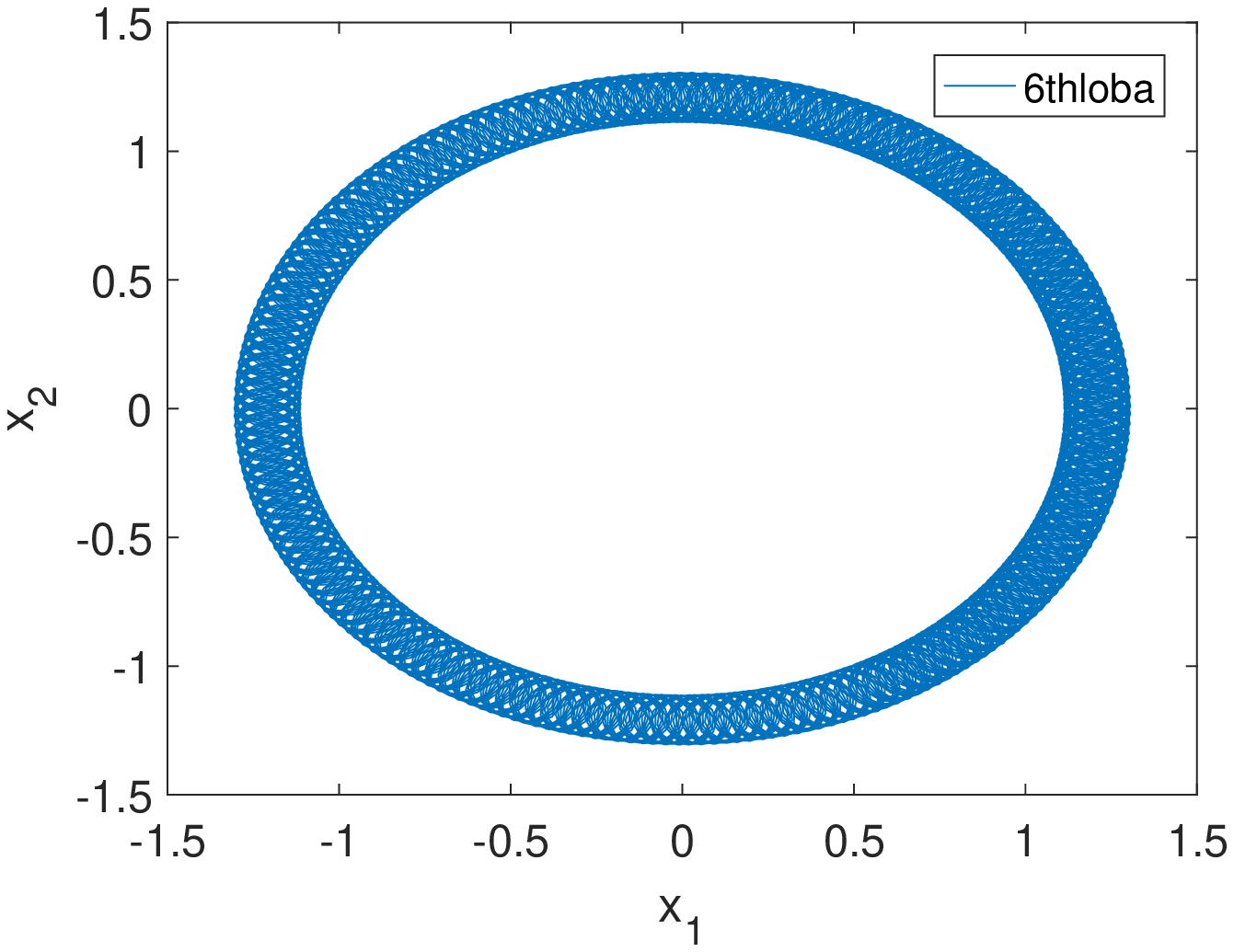}}
\end{center}
\caption{The charged particle orbit in $x_1$-$x_2$ plane simulated by
using the two Poisson integrators and the two Runge-Kutta methods over the interval
$[0,1000\pi]$. The stepsize $h$ is chosen to be $\pi/10$. Subfigure (a), (b), (c) and (d) display the orbit obtained by the 2ndEPI method,
the 4thloba method, the 4thEPI2 method and
the 6thloba method, respectively.}
\label{epmfigure2}
\end{figure}

\begin{figure}
\begin{center}
\subfigure[ ]{
\includegraphics[width=0.48\textwidth]{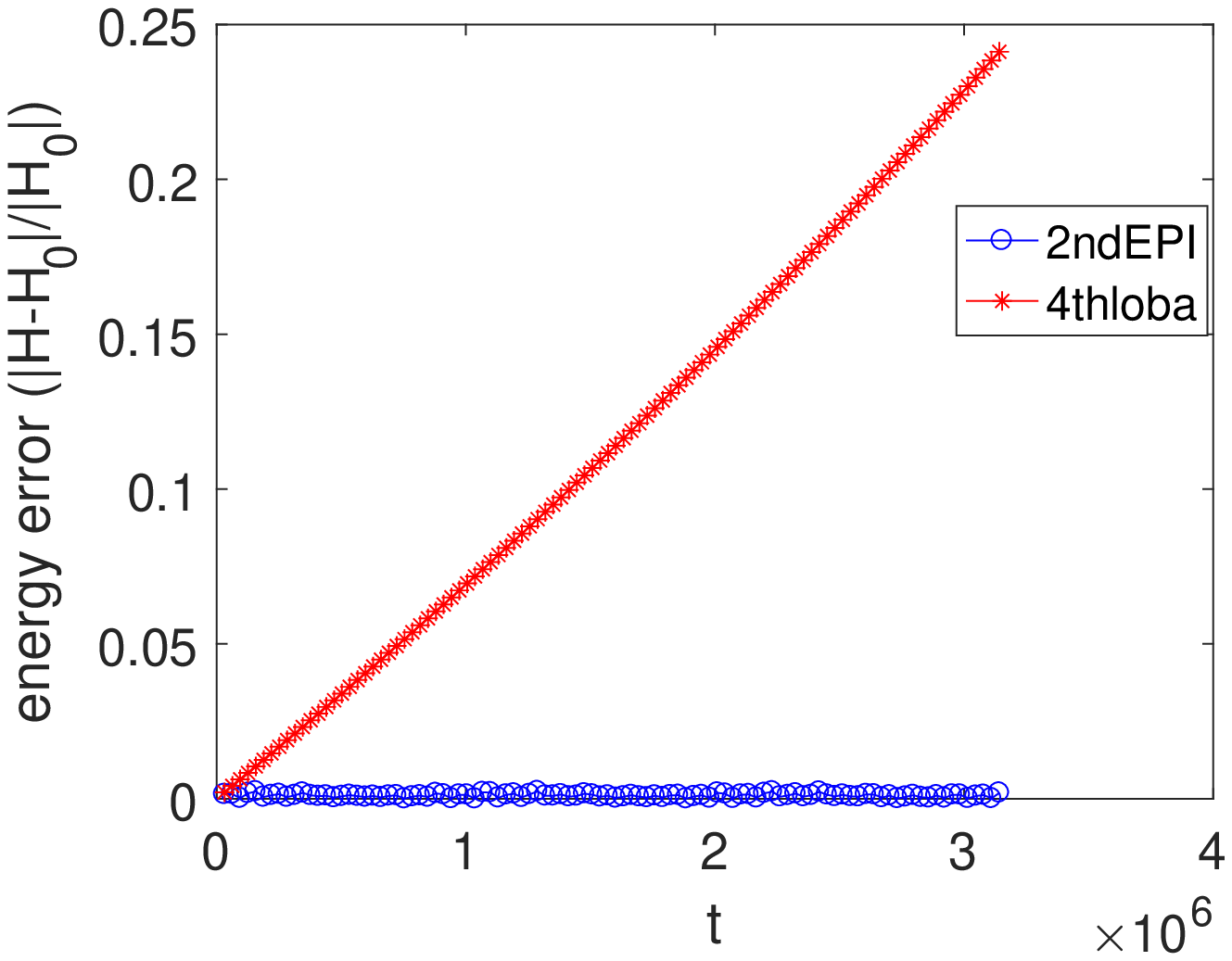}}
\subfigure[ ]{
\includegraphics[width=0.48\textwidth]{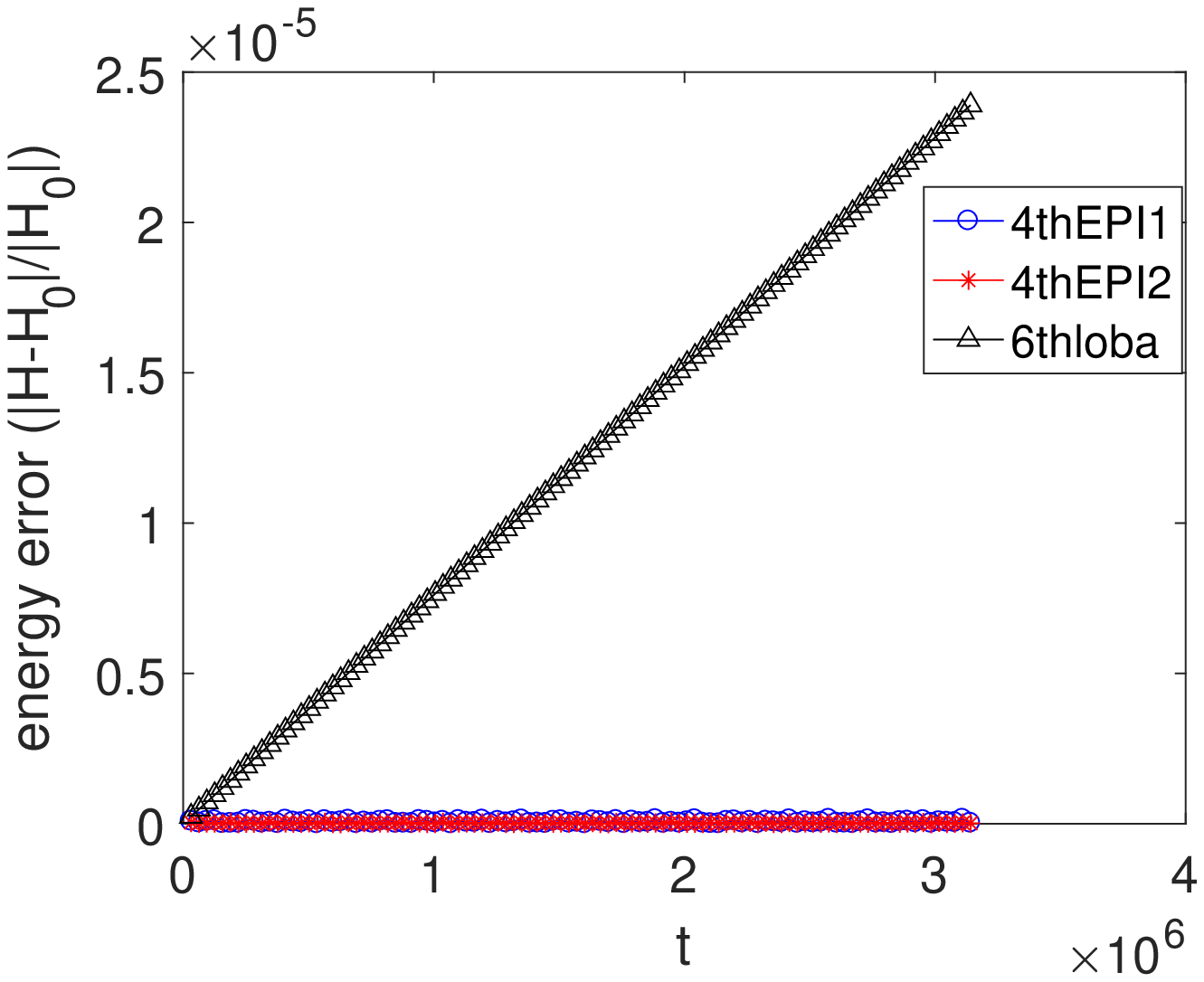}}
\end{center}
\caption{The relative energy error against $t$ for the three Poisson integrators and the two Runge-Kutta methods in Example 1 of the charged particle system. The energy
error is represented by $|H(Z_n)-H(Z_0)|/|H(Z_0)|$. The stepsize is $h=\pi/40$. Subfigure (a) displays the energy errors of the 2ndEPI method and the 4thloba method
over the time interval $[0,10^6\pi]$. Subfigure (b) displays the energy errors of the 4thEPI1 method, 4thEPI2 method and the 6thloba method over the time interval $[0,10^6\pi]$.}
\label{epmfigure3}
\end{figure}

\textbf{Example 2:}
We choose another electronic field
$E=10^{-4}\Big[\frac{1}{x_1},\frac{1}{x_2},\frac{2}{x_3}\Big]^{\top}$
and magnetic field
$B(X)=\Big[-\frac{x_3}{\sqrt{x_2^2+x_3^2}},-\frac{x_1}{\sqrt{x_1^2+x_3^2}},-\frac{x_2}{\sqrt{x_1^2+x_2^2}}\Big]$.  Thus the Hamiltonian is $H=\frac{1}{2}(v_1^2+v_2^2+v_3^2)+10^{-4}\ln(x_1)+10^{-4}\ln(x_2)+2\cdot10^{-4}\ln(x_3)$. We can easily verify that under such circumstance, the matrix $R(Z)$ satisfies the situation in Theorem \ref{theorem2}. Therefore, the original system can be separated into 4 subsystems. As the Hamiltonian function is totally separable, the original system can also be separated into 6 subsystems.

For the first subsystem with the Hamiltonian $H_1=\frac{1}{2}v_1^2$, the exact solutions for the variables $v_2(t)$ and $v_3(t)$ are
\begin{eqnarray*}
v_2(t)&=&v_{20}-v_{10}\int_0^t B_3(x_{10}+\xi v_{10},x_{20},x_{30})d\xi\\
&=&v_{20}+\frac{v_{10}x_{20}}{\sqrt{v_{10}^2}}\ln\Big(\frac{v_{10}^2t+x_{10}v_{10}}{\sqrt{v_{10}^2}}+\sqrt{x_{20}^2+(x_{10}+v_{10}t)^2}\Big)\\
& &-\frac{v_{10}x_{20}}{\sqrt{v_{10}^2}}\ln\Big(\frac{x_{10}v_{10}}{\sqrt{v_{10}^2}}+\sqrt{x_{20}^2+x_{10}^2}\Big)\\
v_3(t)&=&v_{30}+v_{10}\int_0^t B_2(x_{10}+\xi v_{10},x_{20},x_{30})d\xi\\
&=&v_{30}-\sqrt{(x_{10}+tv_{10})^2+x_{30}^2}+\sqrt{x_{10}^2+x_{30}^2}.
\end{eqnarray*}
where $v_{10}, v_{20}, v_{30}, x_{10}, x_{20}, x_{30}$ represent the initial values of $v_1, v_2, v_3, x_1, x_2, x_3$. The explicit Poisson integrators can be constructed as all the subsystems can be solved explicitly.

We perform the numerical simulation under the initial condition $x_0=[1,2,1]^{\top}$, $v_0=[1,2,2]^{\top}$. The numerical results for the five numerical methods are displayed in Figure \ref{epmfigure4}-\ref{epmfigure5}. The global errors of the variables $X=(x_1,x_2,x_3)$ and $V=(v_1,v_2,v_3)$ for the three explicit Poisson integrators 2ndEPI, 4thEPI1 and 4thEPI2 are plotted in Figure \ref{epmfigure4}. The orders of the three methods are clearly shown in Figure \ref{epmfigure4}.
The energy evolutions of different Poisson integrators and different Runge-Kutta methods are demonstrated in Figure \ref{epmfigure5}.  The Poisson integrators have shown their significant advantages in near energy conservation over long-term simulation compared with the higher order Runge-Kutta methods. We have also compared the CPU times of different explicit Poisson integrators and different Runge-Kutta methods in Table \ref{table1}. The CPU time of the 4thloba is 4 times longer than that of the 2ndEPI method.
The results show that the CPU times of the Poisson integrators are less than those of the Runge-Kutta methods.

\begin{figure}
\begin{center}
\subfigure[ ]{
\includegraphics[width=0.48\textwidth]{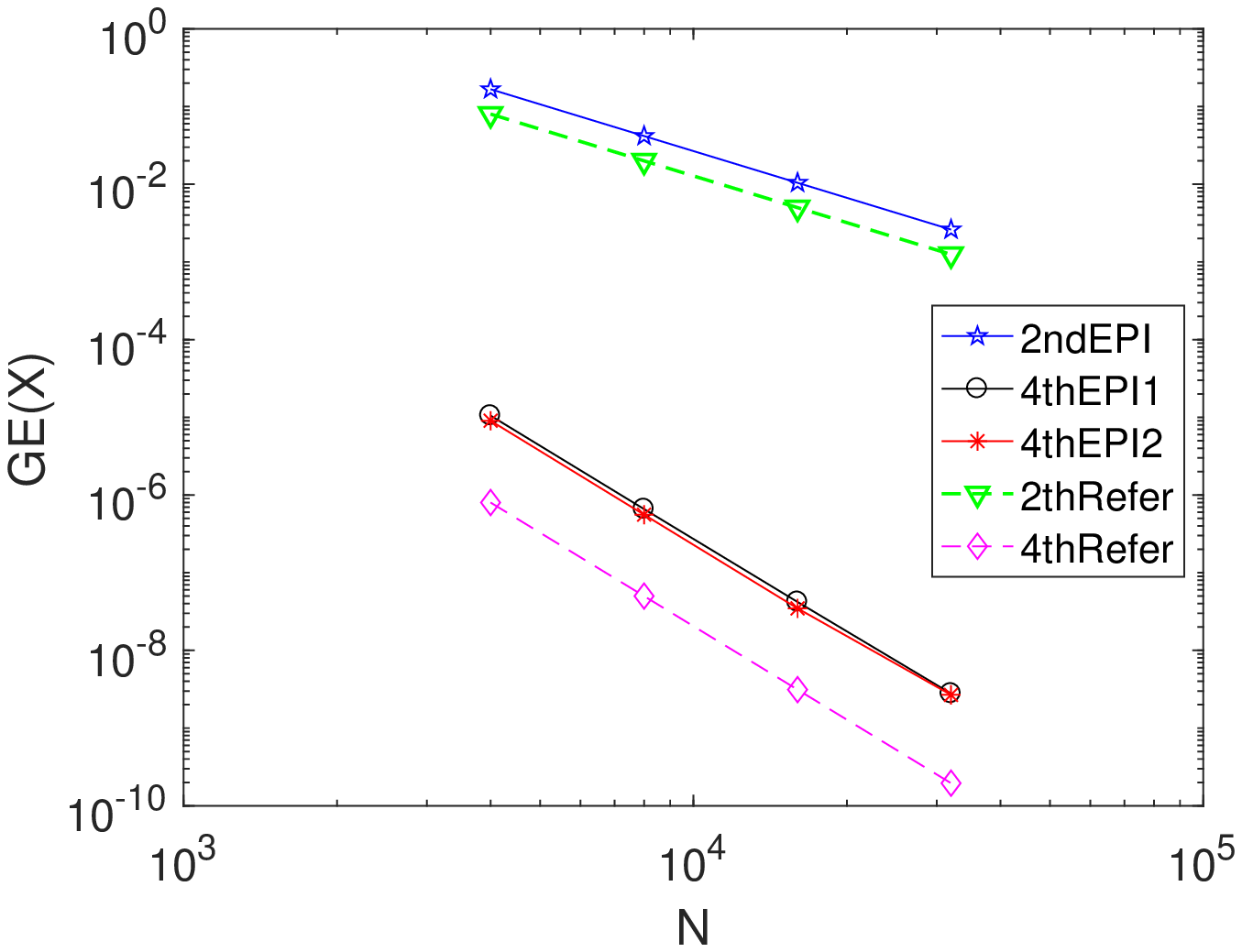}}
\subfigure[ ]{
\includegraphics[width=0.48\textwidth]{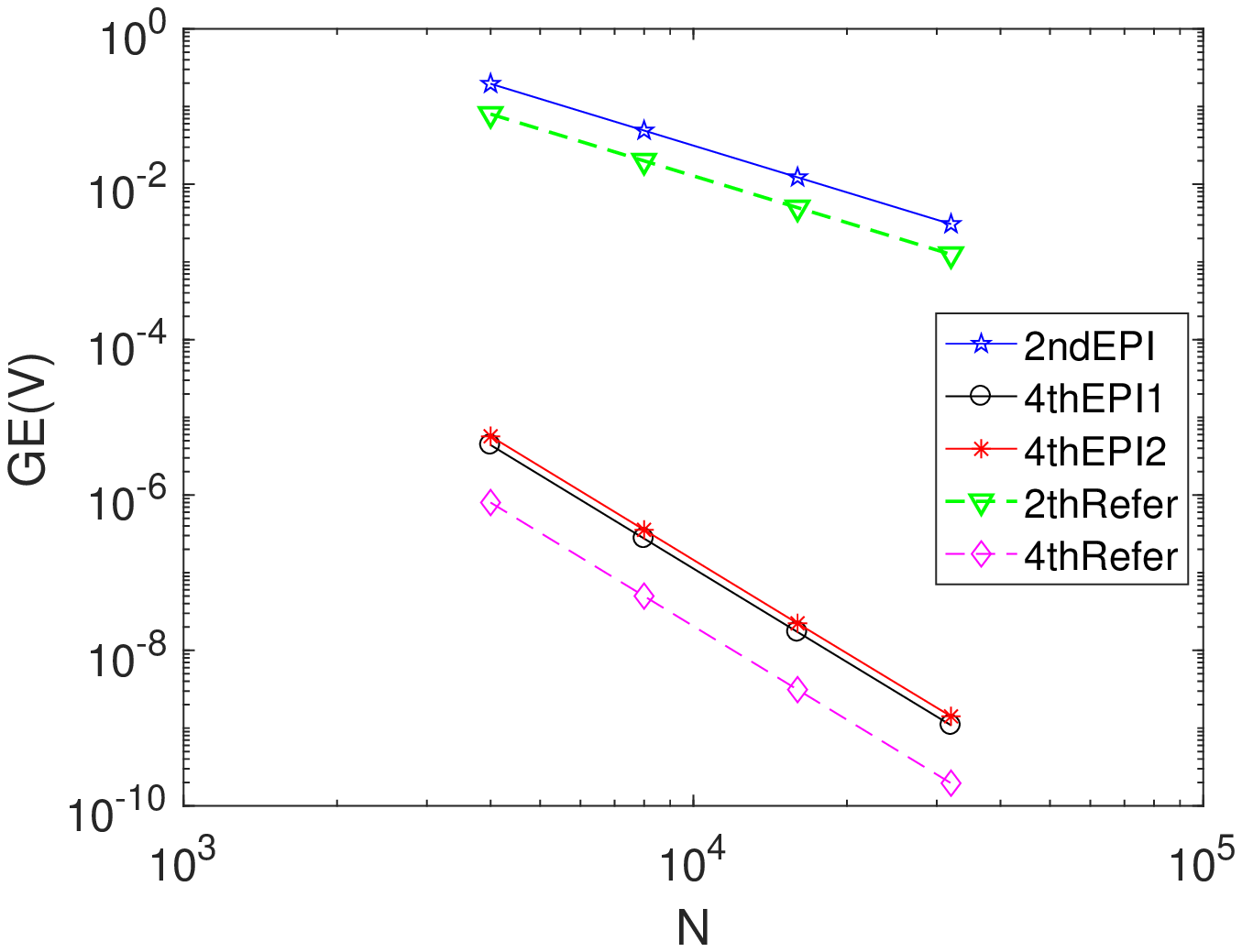}}
\end{center}
\caption{The global errors of $X$ and $V$ against
the time steps $N$ for methods 2ndEPI, 4thEPI1 and 4thEPI2 under different stepsize $h=\pi/20/2^i (i=1,2,3,4)$ in Example 2 of the charged particle system. Here the final time $T=100\pi$. $GE(X)=max_{1\le i\le N} \parallel X_i \parallel_2$. Dashed lines are the
reference lines showing the corresponding convergence orders.
Subfigures (a) shows the global errors of the variable
$X$ while subfigures (b) shows the global errors of the variable $V$.}
\label{epmfigure4}
\end{figure}

\begin{figure}
\begin{center}
\subfigure[ ]{
\includegraphics[width=0.48\textwidth]{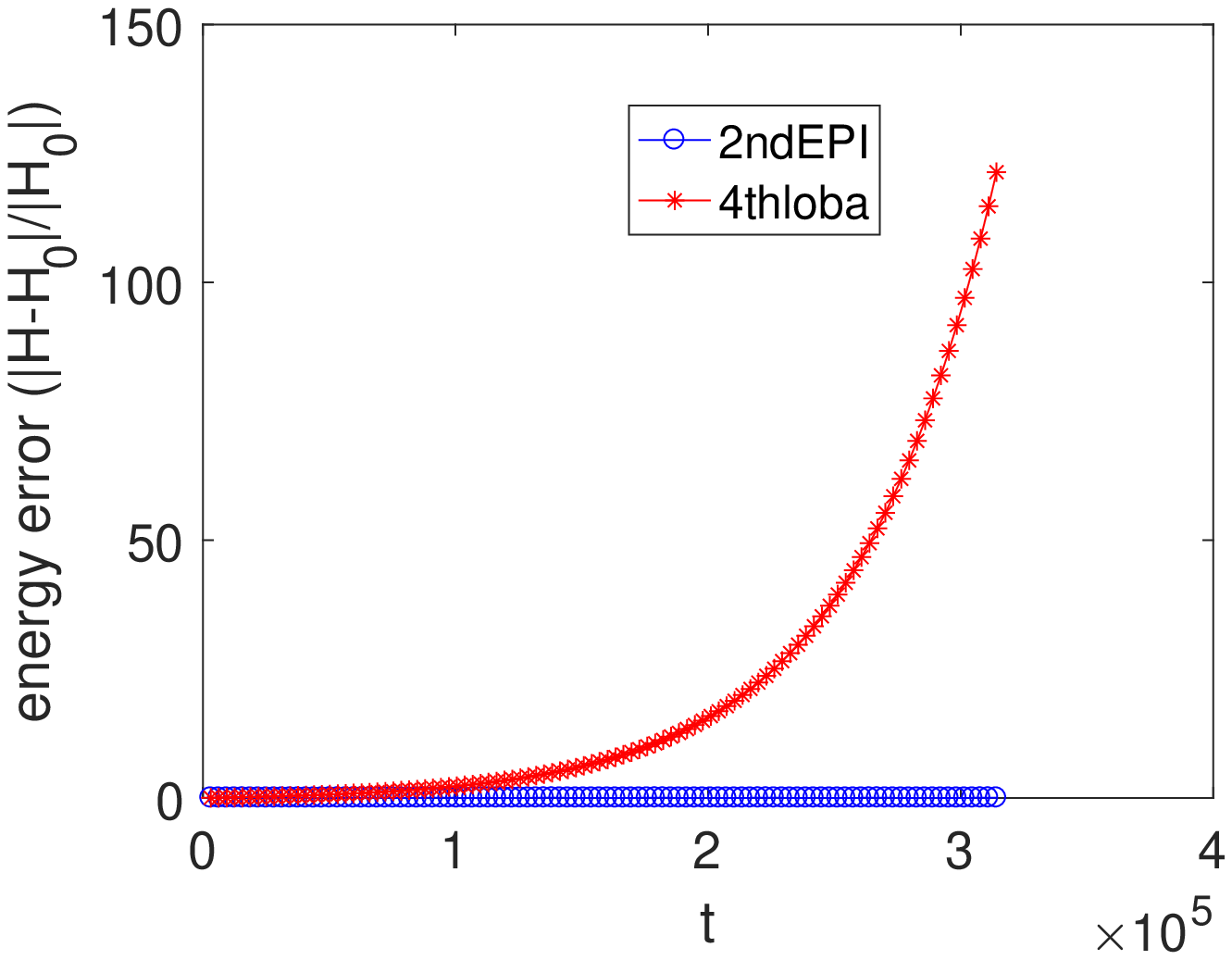}}
\subfigure[ ]{
\includegraphics[width=0.48\textwidth]{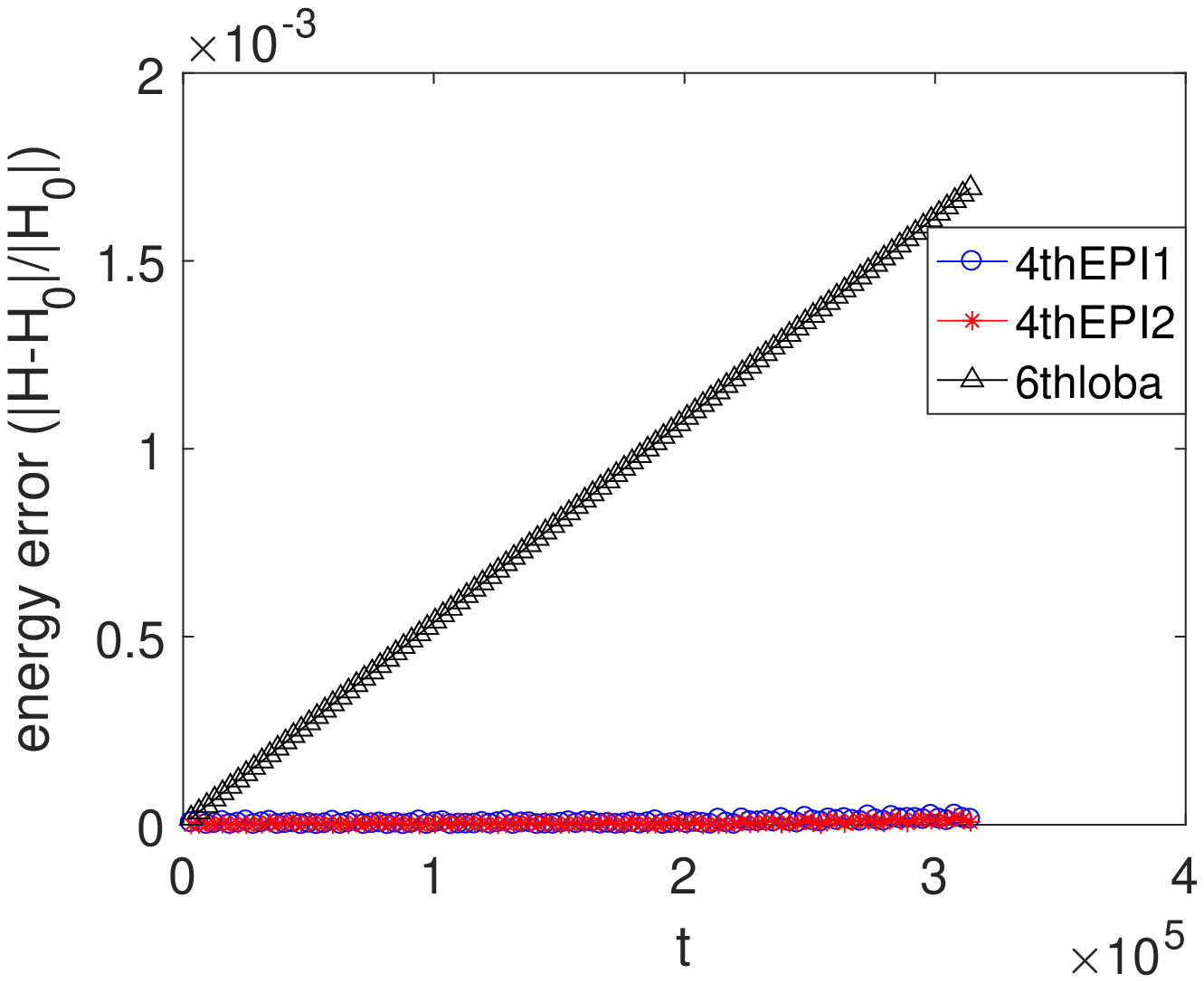}}
\end{center}
\caption{The relative energy error against $t$ for the three Poisson integrators and the two Runge-Kutta methods in Example 2 of the charged particle system. The energy
error is represented by $|H(Z_n)-H(Z_0)|/|H(Z_0)|$. The stepsize is $h=\pi/10$. Subfigure (a) displays the energy errors of the 2ndEPI method and the 4thloba method
over the time interval $[0,10^5\pi]$. Subfigure (b) displays the energy errors of the 4thEPI1 method, 4thEPI2 method and the 6thloba method over the time interval $[0,10^5\pi]$.}
\label{epmfigure5}
\end{figure}

\begin{table}[htbp]
\begin{small}
\caption{The CPU times of the five methods in Example 2 of the charged particle system. The stepsize is $h=\pi/10$ and the time interval is $[0,1000\pi]$.}
\begin{center}
\begin{tabular}{r|c|c|c|c}
\hline
2ndEPI & 4thloba & 4thEPI1 & 4thEPI2 & 6thloba\\
\hline
  0.0690 & 0.2891  &  0.3303 & 0.3984 & 0.7405 \\
\hline
\end{tabular}
\end{center}
\label{table1}
\end{small}
\end{table}

\subsection{Numerical experiments for gyrocenter system}

Here we report a few numerical experiments for two instances of the gyrocenter dynamics of one charged particle.

\textbf{Example 1:}
In the gyroocenter system, if we choose the magnetic strength $|B(X)|=c(z)$ and $b_3=0,b_1$ and $b_2$ are constants with $b_1^2+b_2^2=1$, then $$a_{23}b_1-a_{13}b_2+a_{12}b_3=\dfrac{(h_y-g_z)^2}{c(z)}-ub_1\dfrac{\partial b_2}{\partial z}+\dfrac{(f_z-h_x)^2}{c(z)}+ub_2\dfrac{\partial b_1}{\partial z}=c(z),$$thus we have
$$R(Z)=
\begin{pmatrix}
O_2  &  A  \\  -A^\top  &  O_2
\end{pmatrix}$$
with $$A=
\begin{pmatrix}
\dfrac{b_2}{c(z)} & b_1 \\-\dfrac{b_1}{c(z)} & b_2
\end{pmatrix}.$$
  By setting $b_1=b_2=\dfrac{\sqrt{2}}{2}$ and the vector potential $A(X)=(\dfrac{z^3}{3\sqrt{2}},-\dfrac{z^3}{3\sqrt{2}},0)$, then the magnetic field is $B(X)=(z^2/\sqrt{2},z^2/\sqrt{2},0)$ and $|B(X)|=c(z)=z^2$. The scalar potential is chosen to be $\varphi(X)=x^2+y^2$. We can easily verify that this matrix $R(Z)$ satisfies our requirements in Theorem \ref{theorem1}. Therefore, we can separated the gyrocenter system into two subsystems with $H_1=x^2+y^2$ and $H_2=\mu z^2+\frac{u^2}{2}$. The exact solution of the first subsystem with $H_1=x^2+y^2$ is
  \begin{equation*}
  \left\{
  \begin{split}
  x(t)&=x_0,\\
  y(t)&=y_0,\\
  z(t)&=(3\sqrt{2}(y_0-x_0)t+z_0^3)^{1/3},\\
  u(t)&=u_0-\sqrt{2}(x_0+y_0)t.
  \end{split}
  \right.
  \end{equation*}
where $x_0,y_0,z_0,u_0$ represent the initial values for $x,y,z,u$. As all the subsystems can be solved explicitly,
the explicit Poisson integrators can be constructed.

  The magnetic moment is $\mu=0.01$, the initial value is $(x_0,y_0,z_0,u_0)^\top=(30,40,60,70)^\top$. The numerical results for the five numerical methods are displayed in Figure \ref{epmfigure6}-\ref{epmfigure8}. We plot in Figure \ref{epmfigure6} the projection of the gyrocenter orbit onto the $y-u$ plane using the methods 2ndEPI, 4thEPI2, 4thloba and 6thloba.  We can see that the orbit obtained by the 4thloba method spirals outwards and is not accurate, but the lower order Poisson integrator is able to give accurate orbit.
 To illustrate the order of the Poisson integrators, we
 display in Figure \ref{epmfigure7} the global errors of the variables $X=(x,y,z)$ and $u$ and the lines clearly shows the orders of the three methods. The relative energy errors obtained by different Poisson integrators and different Runge-Kutta methods are shown in Figure \ref{epmfigure8}. The energy error of the second order Poisson integrator 2ndEPI oscillates with an amplitude of order $10^{-3}$ while the energy error of the higher order Runge-Kutta method 4thloba increases along time without bound as can be seen in Figure \ref{epmfigure8}. The energy errors of the methods 4thEPI1 and 4thEPI2 both oscillate with very small amplitudes, but that of the 6thloba method still increases linearly along time. We can also see from Figure \ref{epmfigure8} that the energy error of the 4thEPI2 method is much smaller than that of the 4thEPI1 method.

\begin{figure}
\begin{center}
\subfigure[ ]{
\includegraphics[width=0.48\textwidth]{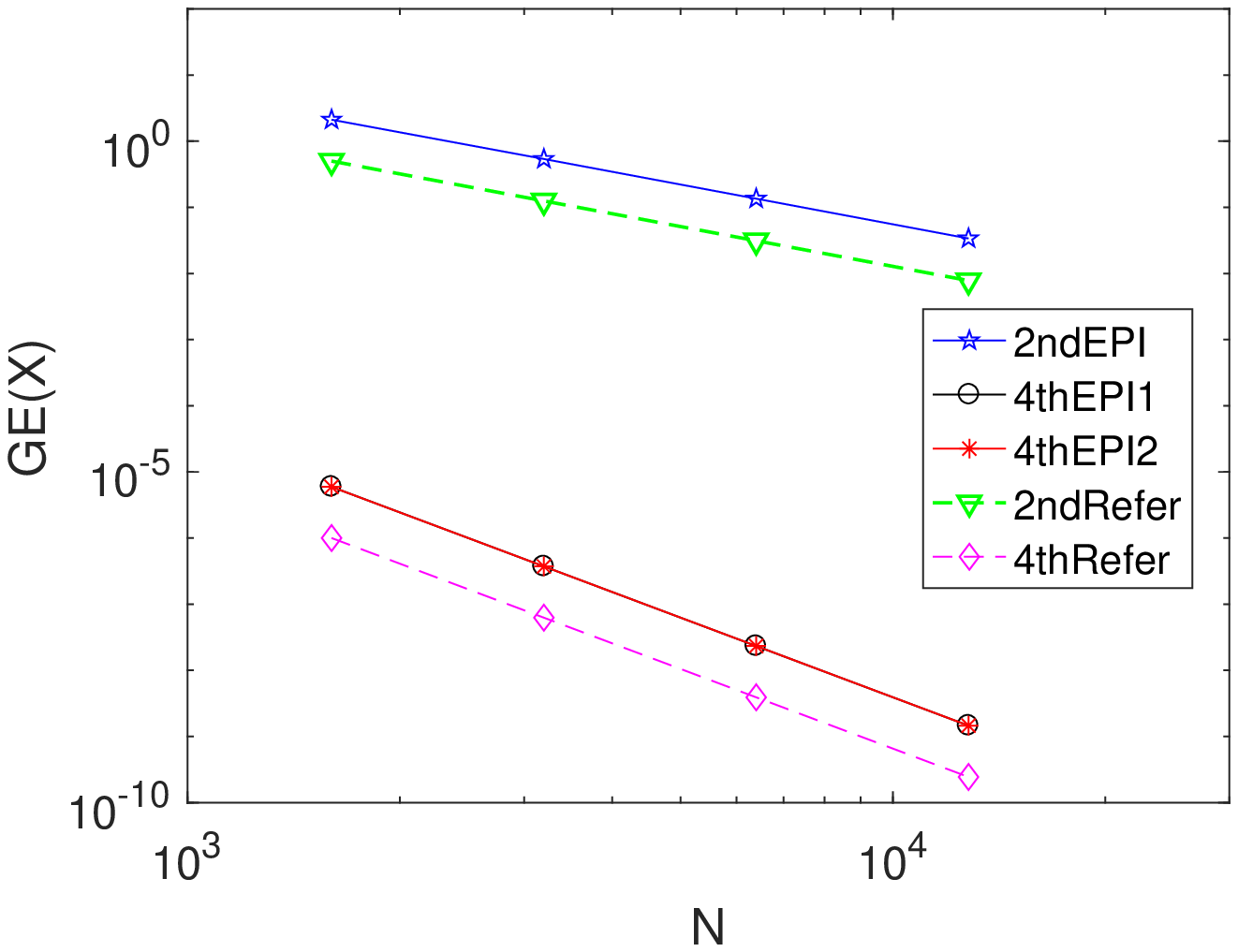}}
\subfigure[ ]{
\includegraphics[width=0.48\textwidth]{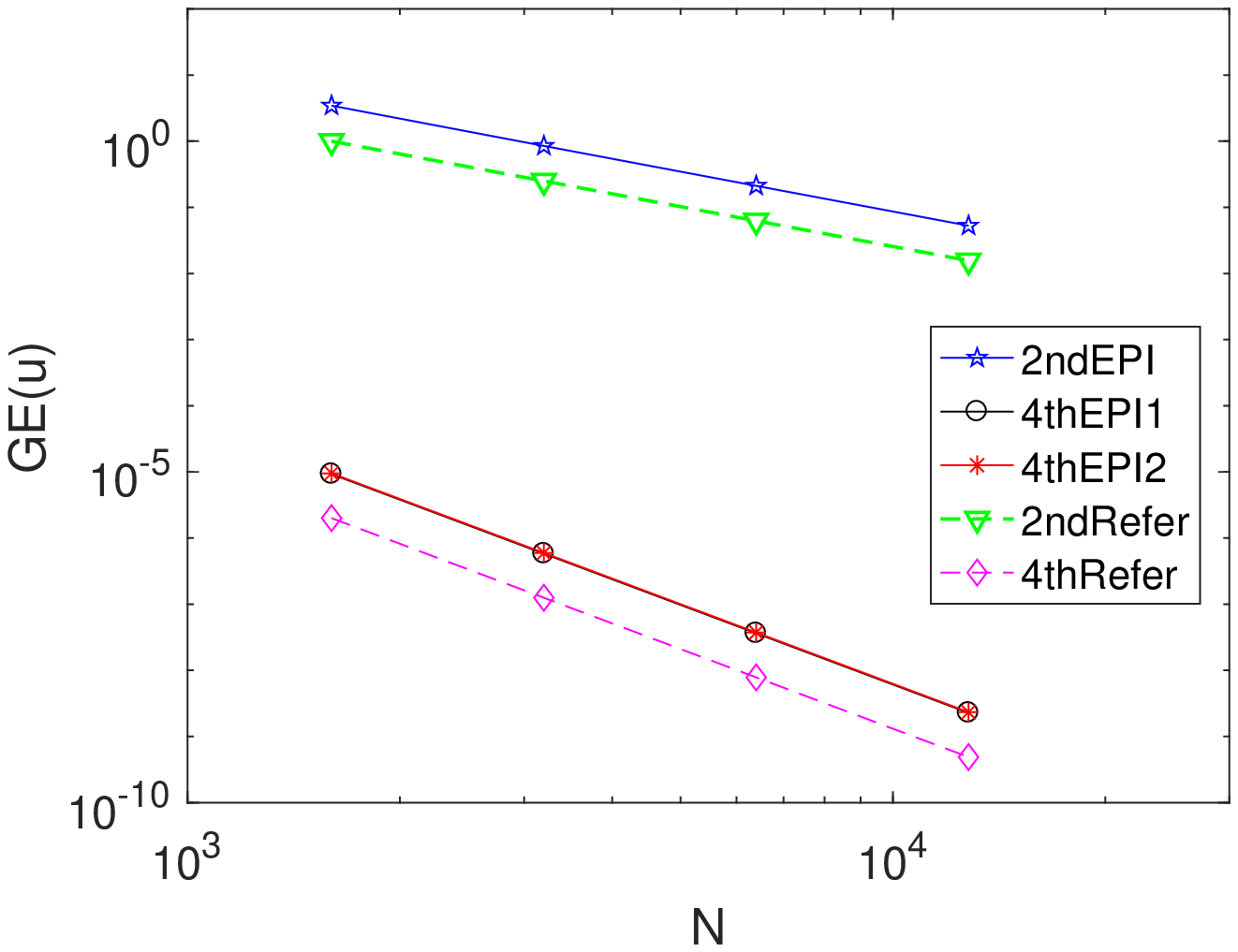}}
\end{center}
\caption{The global errors of $X$ and $u$ against
the time steps $N$ for methods 2ndEPI, 4thEPI1 and 4thEPI2 under different stepsize $h=1/2^i (i=4,5,6,7)$ in Example 1 of the gyrocenter system. Here the final time $T=100$. $GE(X)=max_{1\le i\le N} \parallel X_i \parallel_2$. Dashed lines are the
reference lines showing the corresponding convergence orders.
Subfigures (a) shows the global errors of the variable
$X$ while subfigures (b) shows the global errors of the variable $u$.}
\label{epmfigure6}
\end{figure}

\begin{figure}
\begin{center}
\subfigure[ ]{
\includegraphics[width=0.48\textwidth]{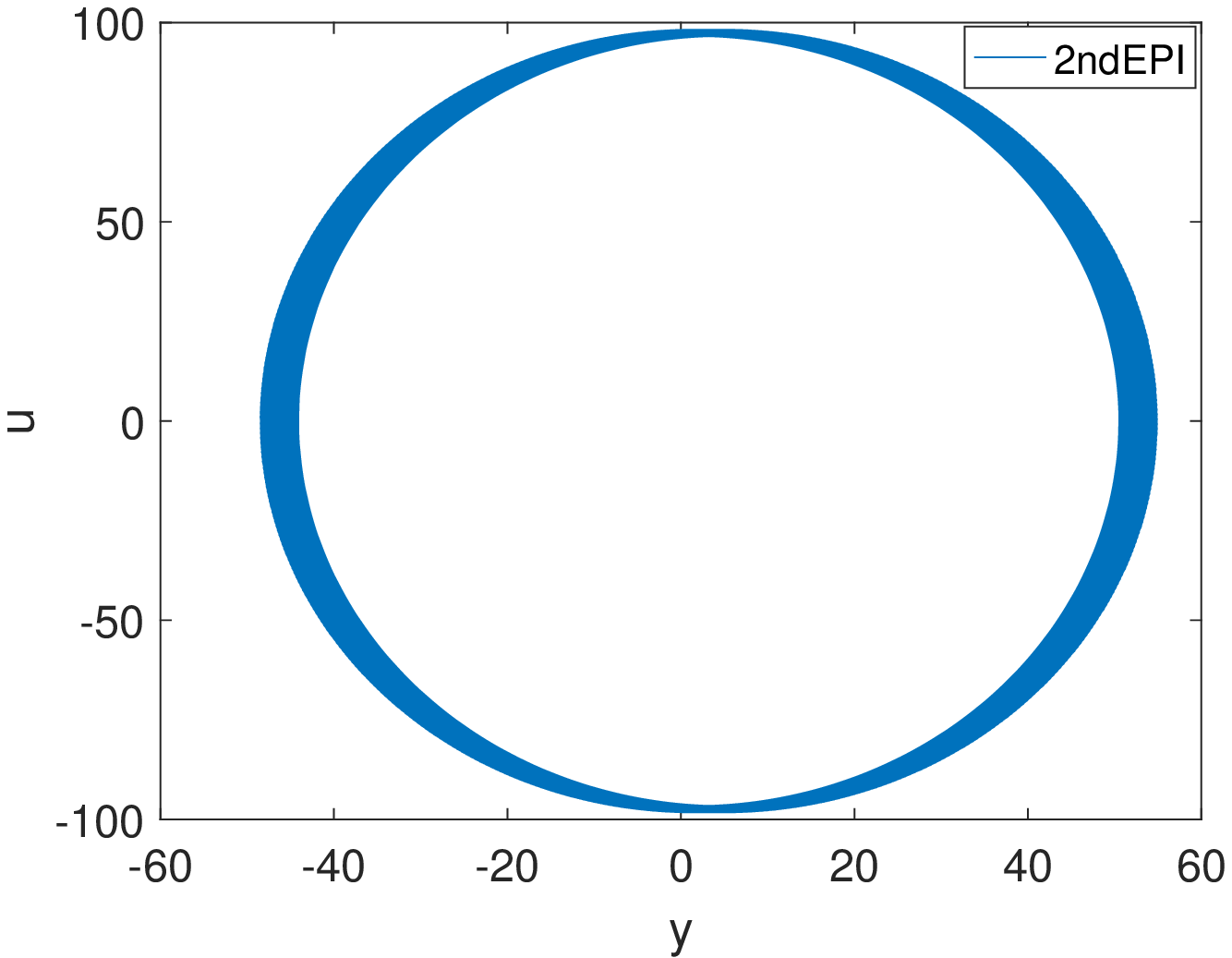}}
\subfigure[ ]{
\includegraphics[width=0.48\textwidth]{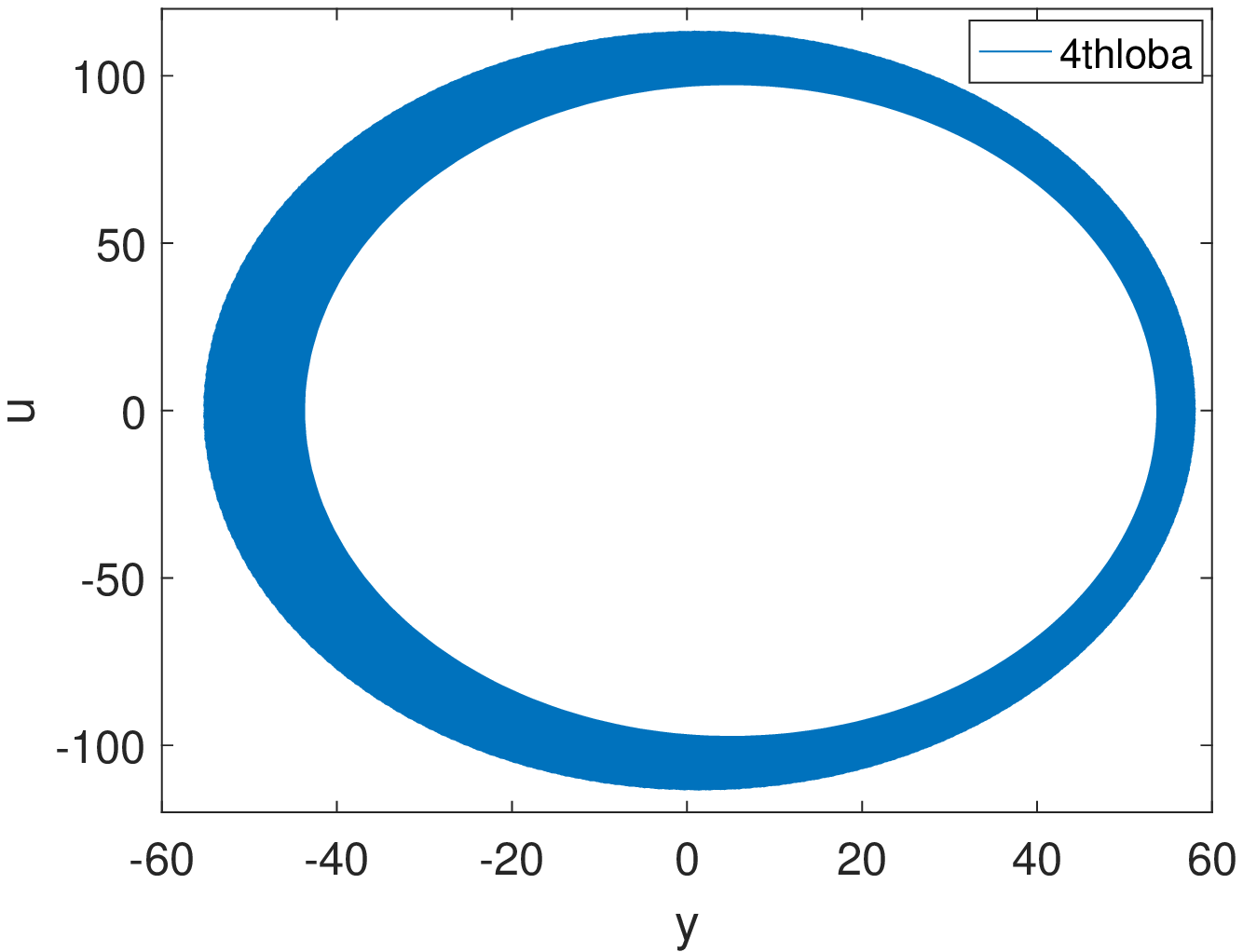}}
\subfigure[ ]{
\includegraphics[width=0.48\textwidth]{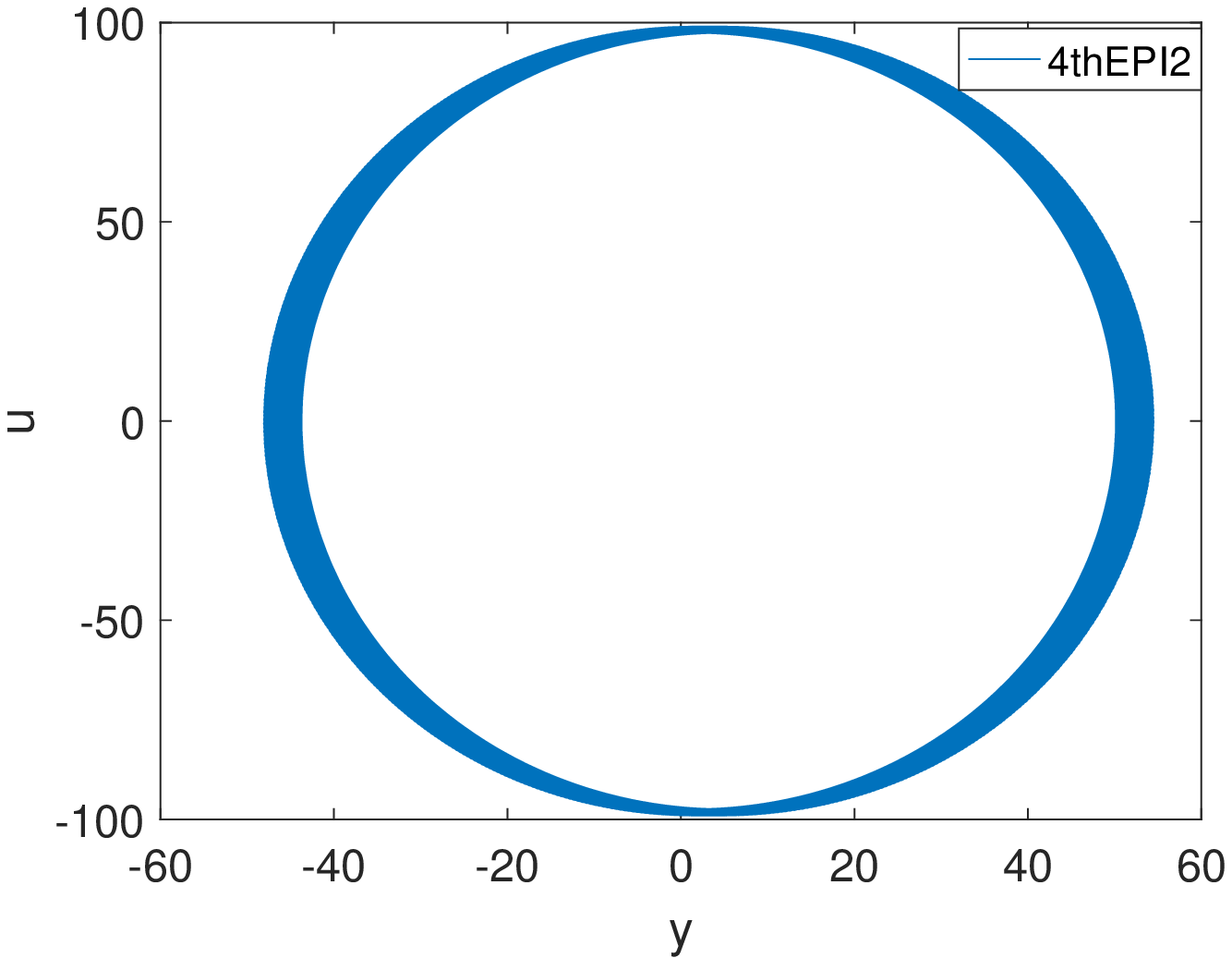}}
\subfigure[ ]{
\includegraphics[width=0.48\textwidth]{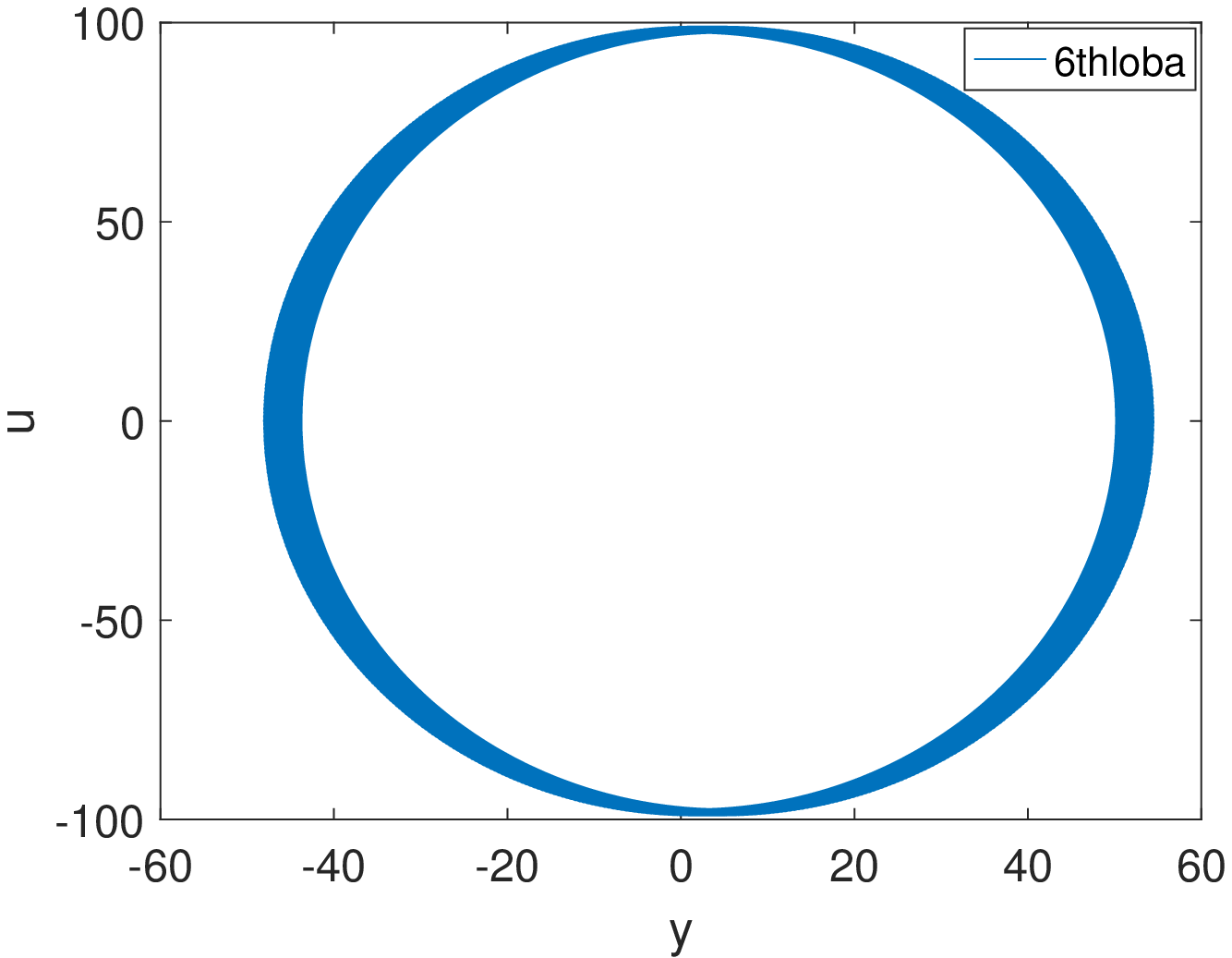}}
\end{center}
\caption{The gyrocenter orbit in $y$-$u$ plane simulated by
using the two Poisson integrators and the two Runge-Kutta methods over the interval
$[0,20000]$. The stepsize is $h=0.25$.  Subfigure (a), (b), (c) and (d) display the orbit obtained by the 2ndEPI method,
the 4thloba method, the 4thEPI2 method and
the 6thloba method, respectively.}
\label{epmfigure7}
\end{figure}

\begin{figure}
\begin{center}
\subfigure[ ]{
\includegraphics[width=0.48\textwidth]{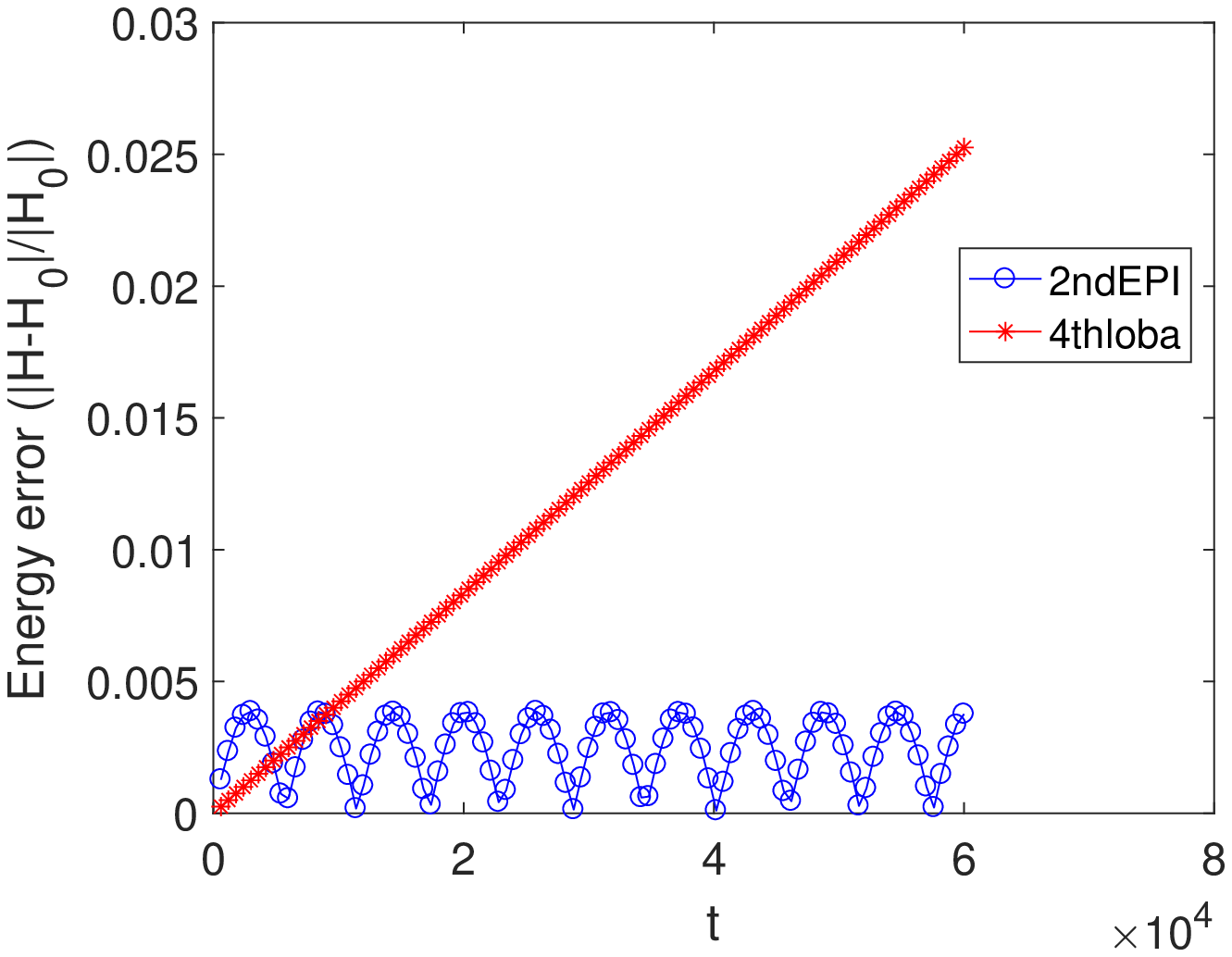}}
\subfigure[ ]{
\includegraphics[width=0.48\textwidth]{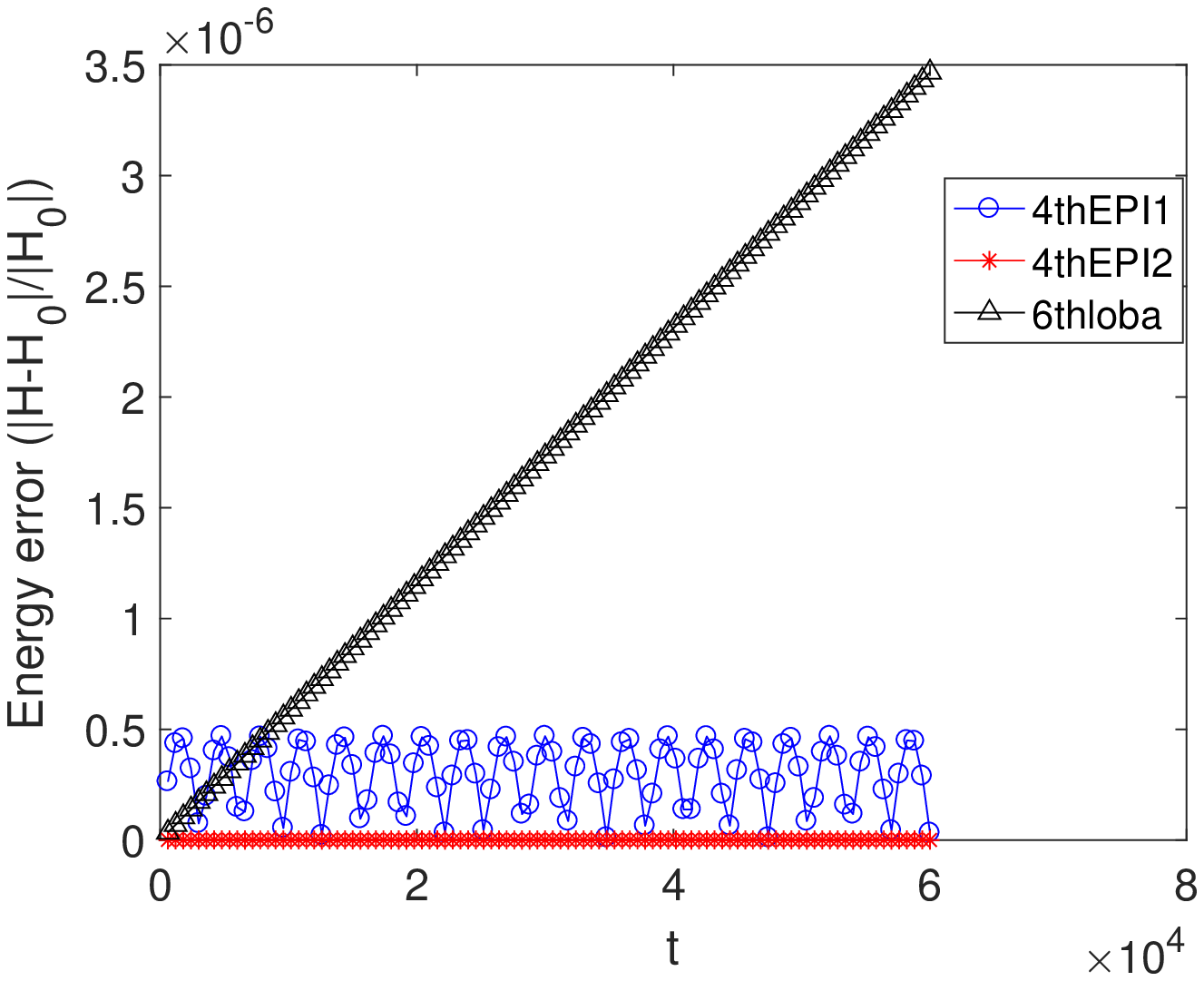}}
\end{center}
\caption{The relative energy error against $t$ for the three Poisson integrators and the two Runge-Kutta methods in Example 1 of the gyrocenter system. The energy
error is represented by $|H(Z_n)-H(Z_0)|/|H(Z_0)|$. The stepsize is $h=0.125$. Subfigure (a) displays the energy errors of the 2ndEPI method and the 4thloba method
over the time interval $[0,60000]$. Subfigure (b) displays the energy errors of the 4thEPI1 method, 4thEPI2 method and the 6thloba method over the time interval $[0,60000]$.}
\label{epmfigure8}
\end{figure}

\textbf{Example 2:}
In the gyrocenter system, we set the vector potential $A(X)=(-\frac{by^3}{3},\frac{ax^3}{3},0).$ Thus the magnetic field is $B(X)=(0,0,ax^2+by^2)$ and the magnetic strength is $|B(X)|=ax^2+by^2$. The scalar potential is set to be $\varphi(X)=2z^2$. We can easily verify under such a magnetic field, the matrix $R(Z)$ satisfies the requirements in Theorem \ref{theorem3}. The original system can be separated into four subsystems with $H_1=\mu ax^2$, $H_2=\mu by^2$, $H_3=2z^2$ and $H_4=\frac{u^2}{2}$. Here we only present the exact solution of the first subsystem with $H_1=\mu ax^2$
\begin{equation*}
\left\{
\begin{split}
&x(t)=x_{0},\\
&\frac{x_0}{2}y(t)+\frac{b}{6ax_0}y(t)^3=\mu t+\frac{x_0}{2}y_0+\frac{by_0^3}{6ax_0},\\
&z(t)=z_0,\\
&u(t)=u_{0}.
\end{split}
\right.
\end{equation*}

The magnetic moment is chosen as $\mu=0.001$, and the initial value is $(x_0,y_0,z_0,u_0)^\top=(30,20,40,50)^\top$. The global errors of the variables $X=(x,y,z)$ and $u$ of the three Poisson integrators 2ndEPI, 4thEPI1 and 4thEPI2 are plotted in Figure \ref{epmfigure4}.  The lines in Figure \ref{epmfigure4} show the orders of these methods. It can also be seen from Figure \ref{epmfigure4} that the global error of the method 4thEPI2 for the variable $u$ is smaller than that of the method 4thEPI1.
The relative energy errors of the five methods are shown in Figure \ref{epmfigure10}. The Poisson integrators have significant superiorities in preserving the energy over long time compared with the higher order Runge-Kutta methods as can be seen in Figure \ref{epmfigure10}. We have also compared the computational costs of the five methods in Table \ref{table2}. The results show that the computational costs of the two Runge-Kutta methods are more than 12 times higher than those of the Poisson integrators. The Poisson integrators has shown their accuracy, efficiency and long-term energy conservation in simulating the gyrocenter system compared with the higher order Runge-Kutta methods.

\begin{figure}
\begin{center}
\subfigure[ ]{
\includegraphics[width=0.48\textwidth]{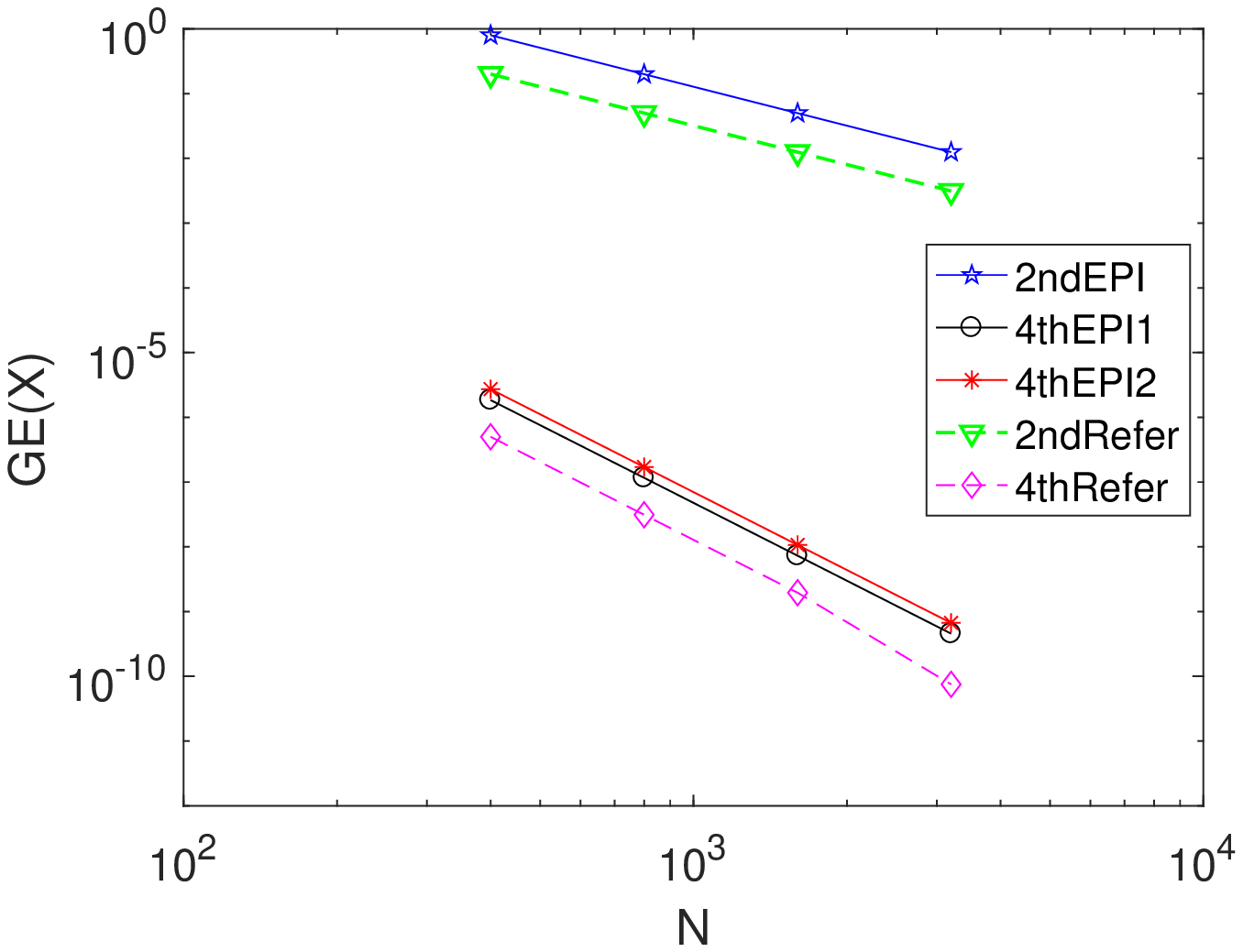}}
\subfigure[ ]{
\includegraphics[width=0.48\textwidth]{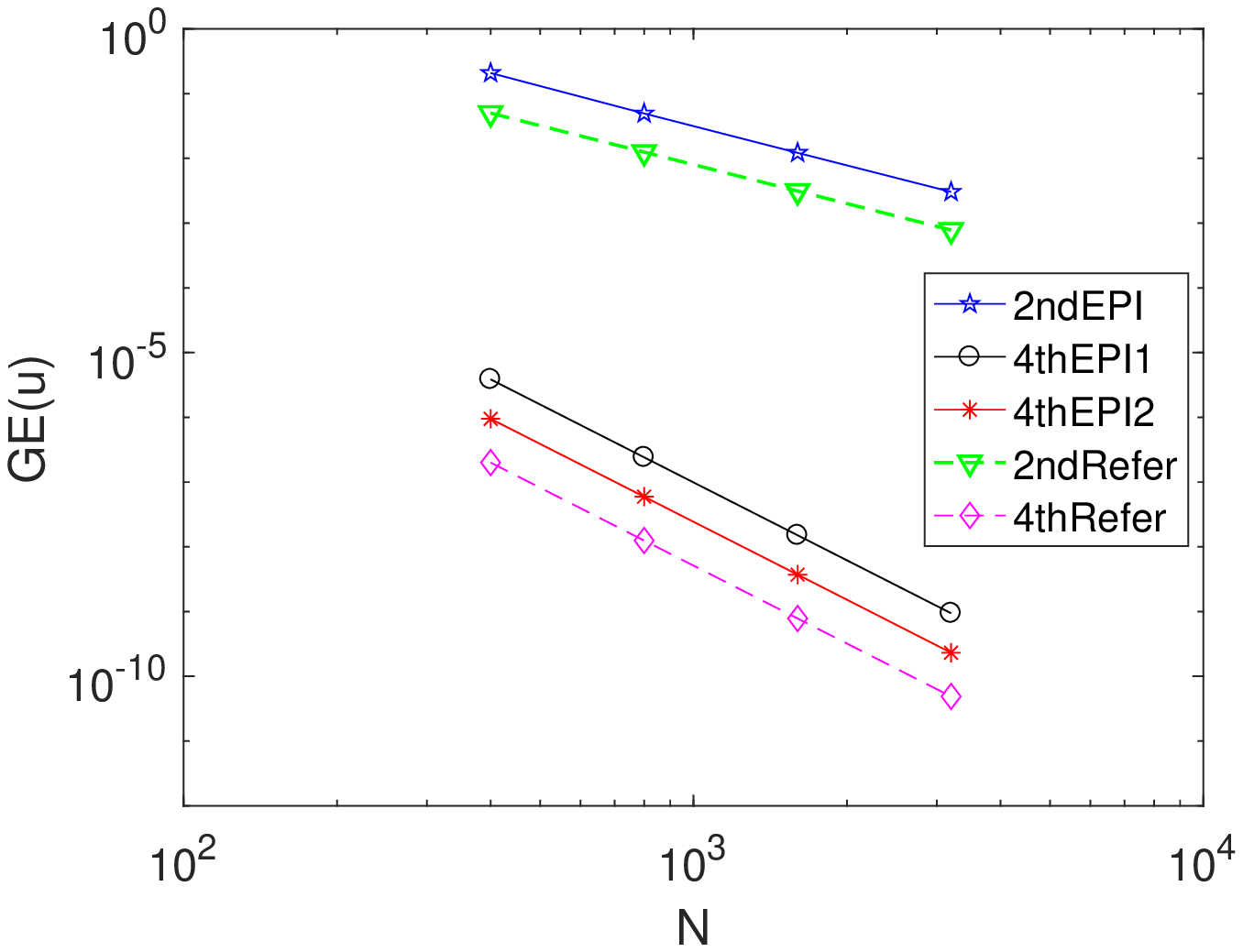}}
\end{center}
\caption{The global errors of $X$ and $u$ against
the time steps $N$ for methods 2ndEPI, 4thEPI1 and 4thEPI2 under different stepsize $h=0.1/2^i (i=1,2,3,4)$ in Example 2 of the gyrocenter system. Here the final time $T=20$. $GE(X)=max_{1\le i\le N} \parallel X_i \parallel_2$. Dashed lines are the
reference lines showing the corresponding convergence orders.
Subfigures (a) shows the global errors of the variable
$X$ while subfigures (b) shows the global errors of the variable $u$.}
\label{epmfigure9}
\end{figure}

\begin{figure}
\begin{center}
\subfigure[ ]{
\includegraphics[width=0.48\textwidth]{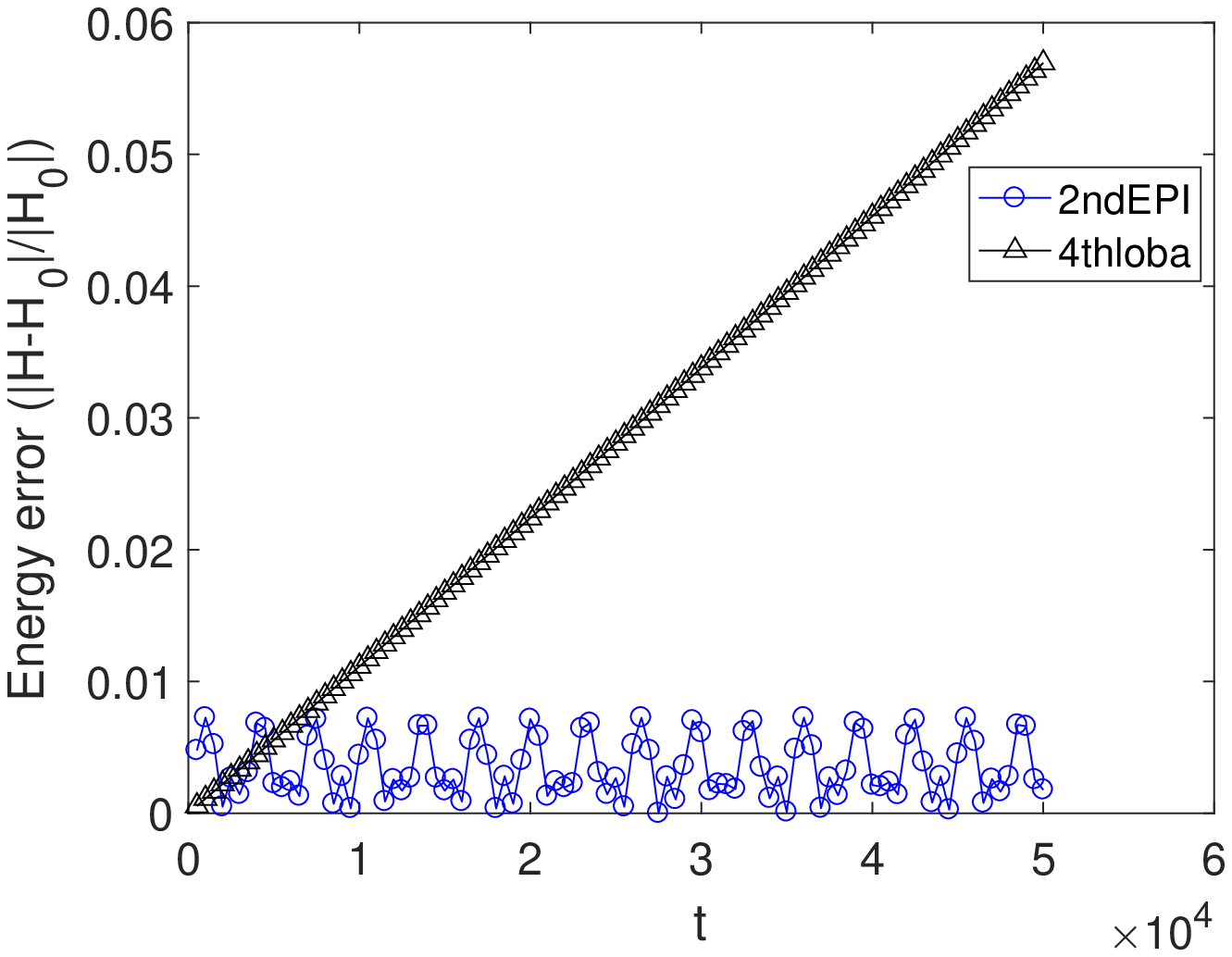}}
\subfigure[ ]{
\includegraphics[width=0.48\textwidth]{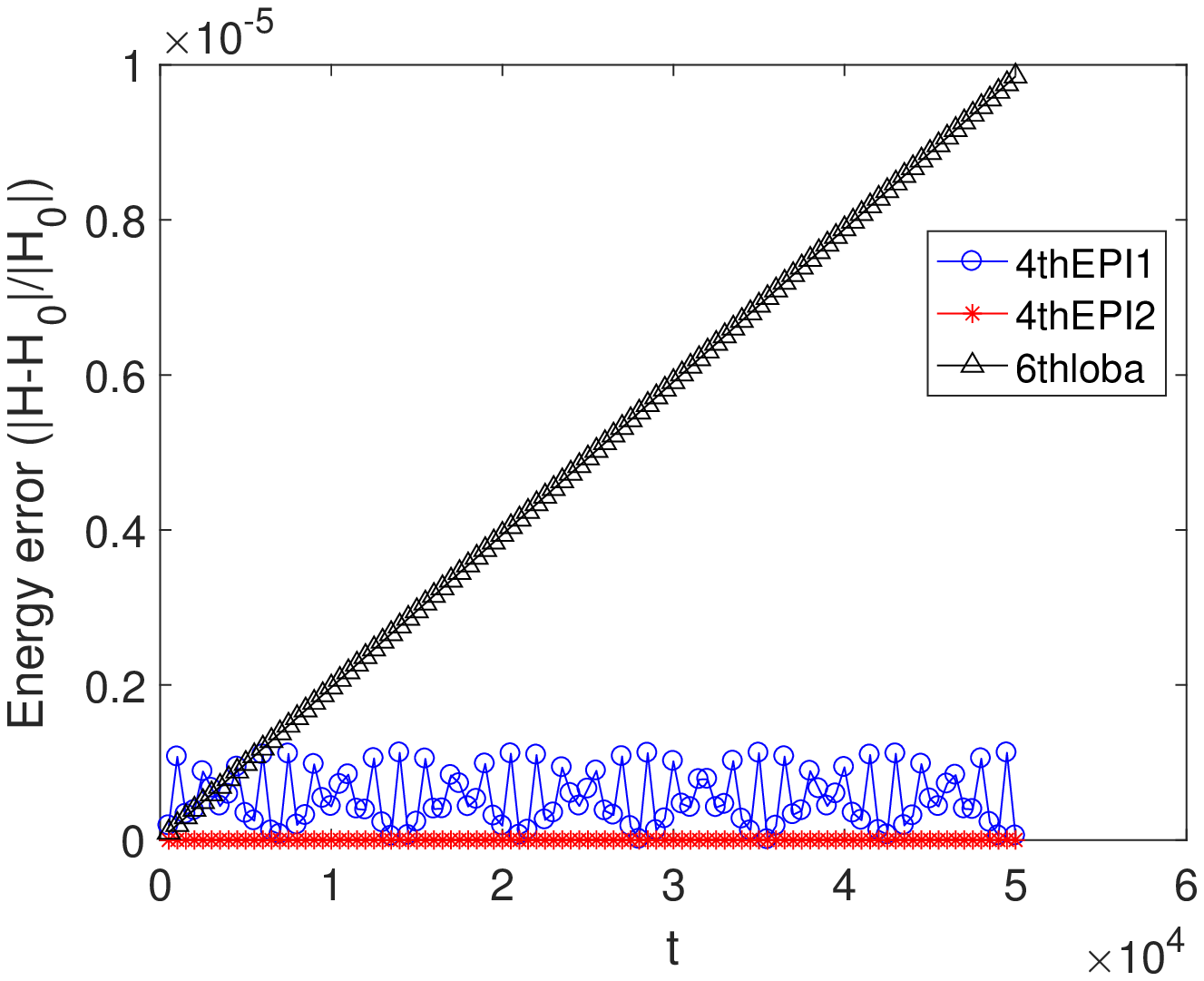}}
\end{center}
\caption{The relative energy error against $t$ for the three Poisson integrators and the two Runge-Kutta methods in Example 1 of the gyrocenter system. The energy
error is represented by $|H(Z_n)-H(Z_0)|/|H(Z_0)|$. The stepsize is $h=0.1$. Subfigure (a) displays the energy errors of the 2ndEPI method and the 4thloba method
over the time interval $[0,50000]$. Subfigure (b) displays the energy errors of the 4thEPI1 method, 4thEPI2 method and the 6thloba method over the time interval $[0,50000]$.}
\label{epmfigure10}
\end{figure}

\begin{table}[htbp]
\begin{small}
\caption{The CPU times of the five methods in Example 2 of the gyrocenter system. The stepsize is $h=0.1$ and the time interval is $[0,200]$.}
\begin{center}
\begin{tabular}{r|c|c|c|c}
\hline
2ndEPI & 4thloba & 4thEPI1 & 4thEPI2 & 6thloba\\
\hline
 0.7261 & 16.9257  &  3.2026 & 3.8296 & 46.1720 \\
\hline
\end{tabular}
\end{center}
\label{table2}
\end{small}
\end{table}

\section{Conclusion}

Poisson integrators for Poisson systems with separable Hamiltonian have been constructed via splitting method. We have separated the Poisson systems in three ways and analyzed three situations where the Poisson integrators can be constructed. The second order and the fourth order Poisson integrators have been constructed by composing the first order Poisson integrator.

We have compared the second order and the fourth order Poisson integrators with the Runge-Kutta methods whose orders are higher than them to show their superiority in simulating two Poisson systems: the charged particle system and the gyrocenter system. Numerical experiments show that the constructed explicit Poisson integrators have significant advantages in preserving the phase orbit and long-term energy conservation compared with the higher order Runge-Kutta methods. The explicit Poisson integrators take less computational costs than the Runge-Kutta methods, as well.

\section*{Acknowledgments}

This research is supported by
the National Natural Science Foundation of China (Grant Nos. 11901564 and 12171466).

\bibliographystyle{abbrv}
\bibliography{main}

\end{document}